\documentclass[12pt]{amsart}
\usepackage{amssymb,latexsym,amsmath}
\textwidth=6in
\oddsidemargin=-0.00in
\evensidemargin=-0.00in
\textheight = 8in

\begin{document}

\begin{abstract}
This is a semi-expository article concerning 
Langlands functoriality and Deligne's conjecture on the special values of 
$L$-functions. The emphasis is on symmetric power $L$-functions associated
to a holomorphic cusp form, while appealing to a recent work of Mahnkopf
on the special values of automorphic $L$-functions.
\end{abstract}

\title[\bf Special values of $L$-functions]
{\bf Functoriality and special values of $L$-functions}

\author{\bf A. Raghuram \ \ \and \ \  \bf Freydoon Shahidi}

\date{\today}
\subjclass{11F67 (11F70, 11F75, 11S37, 22E50, 22E55, 20G05)}

\maketitle

\tableofcontents

\numberwithin{equation}{section}
\newtheorem{thm}[equation]{Theorem}
\newtheorem{cor}[equation]{Corollary}
\newtheorem{lemma}[equation]{Lemma}
\newtheorem{prop}[equation]{Proposition}
\newtheorem{con}[equation]{Conjecture}
\newtheorem{ass}[equation]{Assumption}
\newtheorem{defn}[equation]{Definition}
\newtheorem{rem}[equation]{Remark}
\newtheorem{exer}[equation]{Exercise}
\newtheorem{exam}[equation]{Example}
\newtheorem{quest}[equation]{Question}

\section{Introduction}

Langlands functoriality principle reduces the study of automorphic forms
on the ad\`elic points of a reductive algebraic group to those of an 
appropriate general linear group. In particular, every automorphic 
$L$-function on an arbitrary reductive group must be one for a 
suitable ${\rm GL}_n$. One should therefore be able to reduce the study of
of special values of an automorphic $L$-function to those of
a principal $L$-function of Godement and Jacquet on ${\rm GL}_n$. 

While the integral representations of Godement and Jacquet do not seem  
to admit a cohomological interpretation, there is a recent work of 
J. Mahnkopf \cite{mahnkopf1} \cite{mahnkopf2} which provides us with 
such an interpretation for certain Rankin--Selberg type integrals. 
In particular, modulo a nonvanishing assumption on local archimedean 
Rankin--Selberg product $L$-functions for forms on 
${\rm GL}_n \times {\rm GL}_{n-1}$, he defines a pair of periods, which
seem to be in accordance with those of Deligne \cite{deligne} 
and Shimura \cite{shimura3}. This work of Mahnkopf is quite remarkable
and requires the use of both Rankin--Selberg and Langlands--Shahidi methods
in studying the analytic (and arithmetic) properties of $L$-functions. His work
therefore brings in the theory of Eisenstein series to play an important role. 
In \S\ref{sec:mahnkopf} we briefly review this work of Mahnkopf.

This article is an attempt to test the philosophy--to study the special 
values of $L$-functions while using functoriality--by means of recent 
cases of functoriality established for symmetric powers of automorphic forms
on ${\rm GL}_2$ \cite{kim-jams} \cite{kim-shahidi-annals2}. 
While a proof of the precise formulae in the conjectures of Deligne
\cite{deligne} still seem to be out of reach, we expect to be able to 
prove explicit connections between the special values of symmetric power
$L$-functions twisted by Dirichlet characters and those of the original 
symmetric power $L$-functions using this work of Mahnkopf. These 
relations are formulated in this paper as Conjecture~\ref{con:twisted}
which also follows from the more general conjectures of 
Blasius \cite{blasius2} and Panchiskin \cite{panchiskin}, although the
heuristics underlying our conjecture are quite different.

A standard assumption made in the study of special values of $L$-functions 
is that the representations (to which are attached the $L$-functions) are 
cohomological. This is the case in Mahnkopf's work. A global 
representation being cohomological is entirely determined by the 
archimedean components. For representations which are symmetric power
lifts of a cusp form on ${\rm GL}_2$ we have the following fact. 
Consider a holomorphic cusp form on the upper half plane of weight $k$. 
This corresponds to a cuspidal automorphic representation, which is 
cohomological if $k \geq 2$, and any symmetric power lift, if cuspidal, is 
essentially cohomological. 
See Theorem~\ref{thm:symmetric-cohomology}.
(If the weight $k =1$ then the representation is not 
cohomological, and furthermore none of the symmetric power $L$-functions 
have any critical points.) In \S\ref{sec:cohomology}
we review representations with cohomology in the case of ${\rm GL}_n$.

We recall the formalism of Langlands functoriality for symmetric powers in 
\S\ref{sec:functorial}. We then review Deligne's conjecture for the 
special values of symmetric power $L$-functions in \S\ref{sec:deligne}
and give a brief survey as to which cases are known so far. In 
\S\ref{sec:dihedral} we sketch a proof of the conjecture for 
dihedral representations. We add that in the dihedral case the validity
of the conjecture in known to experts.

\smallskip

{\small {\it Acknowledgments:} This paper is based on a talk given by the 
second author during the workshop on Eisenstein series and Applications at 
the American Institute of Mathematics (AIMS) in August of 2005; he would 
like to thank the organizers Wee Teck Gan, Stephen~Kudla and Yuri 
Tschinkel as well as Brian Conrey of AIMS for a most productive meeting. 
Both the authors thank Laurent Clozel, Paul Garrett, Joachim Mahnkopf and 
Dinakar~Ramakrishnan for helpful discussions and email correspondence. 
We also thank the referee for a very careful reading and for making
suggestions to improve the content of the paper. 
The work of the second author is partially supported by NSF grant 
DMS-0200325.}

\section{Symmetric powers and functoriality}
\label{sec:functorial}

In this section we recall the formalism of Langlands functoriality
especially for symmetric powers. We will be brief here as there are 
several very good expositions of the principle of functoriality; 
see for instance \cite[Chapter 2]{clozel2}.  

Let $F$ be a number field and let 
${\mathbb A}_F$ be its ad\`ele ring. We let $\pi$ be a cuspidal 
automorphic representation of ${\rm GL}_2({\mathbb A}_F)$, by which we mean
that, for some $s \in {\mathbb R}$, $\pi \otimes |\cdot |^s$ is an 
irreducible summand of
$$
L^2_{\rm cusp}({\rm GL}_2(F)\backslash
{\rm GL}_2({\mathbb A}_F), \omega)
$$
the space of square-integrable cusp forms with central character $\omega$. 
We have the decomposition
$\pi = \otimes_v' \pi_v$ where $v$ runs over all places of $F$ and 
$\pi_v$ is an irreducible admissible representation of ${\rm GL}_2(F_v)$.

The local Langlands correspondence for ${\rm GL}_2$ (see \cite{kutzko} 
and \cite{kudla} for the $p$-adic case and \cite{knapp} for the 
archimedean case), 
says that to $\pi_v$ is associated a representation 
$\sigma(\pi_v) : W_{F_v}' \to {\rm GL}_2({\mathbb C})$ of the 
Weil--Deligne group $W_{F_v}'$ of $F_v$. (If $v$ is infinite, we take 
$W_{F_v}' = W_{F_v}$.) Let $n \geq 1$ be an integer. Consider the 
$n$-th symmetric power of $\sigma(\pi_v)$ which is an $n+1$ dimensional
representation. This is simply the composition of $\sigma(\pi_v)$ with 
${\rm Sym}^n : {\rm GL}_2({\mathbb C}) \to {\rm GL}_{n+1}({\mathbb C})$. 
Appealing to the local Langlands correspondence for ${\rm GL}_{n+1}$
(\cite{harris-taylor}, \cite{henniart}, \cite{knapp}, \cite{kudla})
we get an irreducible admissible representation of 
${\rm GL}_{n+1}(F_v)$ which we denote as ${\rm Sym}^n(\pi_v)$.  
Now define a global representation of ${\rm Sym}^n(\pi)$ of 
${\rm GL}_{n+1}({\mathbb A}_F)$ by 
$$
{\rm Sym}^n(\pi) := \otimes'_v \ {\rm Sym}^n(\pi_v).
$$

{\it Langlands principle of functoriality} predicts that 
${\rm Sym}^n(\pi)$ is an automorphic representation of ${\rm 
GL}_{n+1}({\mathbb A}_F)$, i.e., it is isomorphic to an irreducible 
subquotient of the representation of ${\rm GL}_{n+1}({\mathbb A}_F)$ 
on the space of automorphic forms \cite[\S4.6]{borel-jacquet}. 
If $\omega_{\pi}$ is the central character of $\pi$ then 
$\omega_{\pi}^{n(n+1)}$ is the central character of ${\rm Sym}^n(\pi)$. 
Actually it is expected to be an isobaric automorphic representation. 
(See \cite[Definition 1.1.2]{clozel2}
for a definition of an isobaric representation.) 
The principle of functoriality for the $n$-th symmetric power 
is known for $n=2$ by Gelbart--Jacquet \cite{gelbart-jacquet};
for $n=3$ by Kim--Shahidi \cite{kim-shahidi-annals2}; and 
for $n=4$ by Kim \cite{kim-jams}. For certain special forms $\pi$, for 
instance, if $\pi$ is dihedral, tetrahedral, octahedral or icosahedral, it 
is know for all $n$ (see \cite{kim-inventiones} \cite{raghuram-shahidi}).

\section{Deligne's conjecture for symmetric power $L$-functions}
\label{sec:deligne}

Deligne's conjecture on the special values of $L$-functions is a  
conjecture concerning the arithmetic nature of 
special values of motivic $L$-functions at critical points. 
The definitive reference is 
Deligne's article \cite{deligne}. We begin by introducing the symmetric 
power $L$-functions, which are examples of motivic $L$-functions, and then 
state Deligne's conjecture for these $L$-functions.

\subsection{Symmetric power $L$-functions}
Let $\varphi \in S_k(N,\omega)$, i.e., $\varphi$ is a
holomorphic cusp form on the upper half plane,
for $\Gamma_0(N)$, of weight $k$, and nebentypus character
$\omega$. Let $\varphi(z) = \sum_{n=1}^{\infty} a_nq^n$ be the Fourier
expansion of $\varphi$ at infinity.
We let $L(s, \varphi)$ stand for the completed $L$-function associated to
$\varphi$ and let $L_f(s, \varphi)$ stand for its finite part.
Assume that $\varphi$ is a primitive form in $S_k(N,\omega)$.
By primitive, we mean that it is an eigenform, a newform and is normalized
such that $a_1(\varphi)=1$. In a suitable right half plane 
the finite part $L_f(s, \varphi)$ is a Dirichlet series with an Euler 
product 
$$
L_f(s, \varphi) =
\sum_{n=1}^{\infty} a_n n^{-s} =
\prod_p L_p(s, \varphi),
$$
where, for all primes $p$, we have
$$
L_p(s, \varphi) =
(1-a_pp^{-s} + \omega(p)p^{k-1-2s})^{-1} =
(1-\alpha_{p,\varphi} p^{-s})^{-1} (1-\beta_{p,\varphi} p^{-s})^{-1},
$$
with the convention that if $p|N$ then $\beta_{p,\varphi} = 0$. 
Let $\eta$ be a Dirichlet character modulo an integer $M$. 
We let $S_f$ stand for the set of primes dividing $NM$ and let
$S = S_f \cup \{\infty\}$.
For any $n \geq 1$, the twisted partial $n$-th symmetric power $L$-function 
of $\varphi$ is defined as
$$
L^S(s, {\rm Sym}^n \varphi, \eta) =
\prod_{p \notin S} L_p(s, {\rm Sym}^n \varphi, \eta ),
$$
where, for all $p \notin S$, we have
$$
L_p(s,{\rm Sym}^n \varphi, \eta) =
\prod_{i=0}^n (1-\eta(p)\alpha_{p,\varphi}^i \beta_{p,\varphi}^{n-i}
p^{-s})^{-1}.
$$
If $M=1$ and $\eta$ is (necessarily) trivial then we denote the corresponding $L$-function as 
$L^S(s, {\rm Sym}^n \varphi)$.

Using the local Langlands correspondence the partial $L$-function 
can be completed by defining local factors $L_p(s, {\rm Sym}^n \varphi, \eta)$
for $p \in S$ and the completed $L$-function, which is a product over
all $p$ including $\infty$, will be denoted as $L(s, {\rm Sym}^n \varphi, \eta)$.  
The Langlands program predicts that $L(s, {\rm Sym}^n \varphi, \eta)$, which
is initially defined only in a half plane, admits a meromorphic
continuation to the entire complex plane and that it has all the usual
properties an automorphic $L$-function is supposed to have. This is known
for $n \leq 4$ from the works of several people including Hecke, Shimura,
Gelbart--Jacquet, Kim and Shahidi. It is also known for all $n$
for cusp forms of a special
type, for instance, if the representation corresponding to the cusp form
is dihedral or the other polyhedral types. (The reader is referred to the
same references as in the last paragraph of the previous section.)

\subsection{Deligne's conjecture}
Let $\varphi$ be a primitive form in $S_k(N,\omega)$.
Let $M(\varphi)$ be the motive associated to $\varphi$. This is a rank
two motive over ${\mathbb Q}$ with coefficients in the field
${\mathbb Q}(\varphi)$ generated by the Fourier coefficients of $\varphi$.
(We refer the reader to Deligne \cite{deligne} and 
Scholl \cite{scholl} for details about $M(\varphi)$.)
The $L$-function $L(s, M(\varphi))$ associated to this motive is
$L(s, \varphi)$.

Given the motive $M(\varphi)$, and given any complex embedding
$\sigma$ of ${\mathbb Q}(\varphi)$, there are nonzero complex numbers,
called Deligne's periods, $c^{\pm}(M(\varphi),\sigma)$ associated to it.
For simplicity of exposition we concentrate on the case when $\sigma$
is the identity map, and denote the corresponding periods as 
$c^{\pm}(M(\varphi))$. We note 
that this ``one component version" of Deligne's conjecture 
that we state below is not as strong as the full conjecture and so we are dealing
with a simplistic situation. 

Similarly, for the symmetric powers
${\rm Sym}^n(M(\varphi))$, we have the corresponding periods
$c^{\pm}({\rm Sym}^n(M(\varphi))$. In \cite[Proposition 7.7]{deligne}
the periods for the symmetric powers are related to the
periods of $M(\varphi)$. The explicit formulae therein have
a quantity
$\delta(M(\varphi))$ which is essentially the Gauss sum of the nebentypus
character $\omega$ and is given by
$$
\delta(M(\varphi)) \sim
(2 \pi i)^{1-k}\mathfrak{g}(\omega) :=
(2 \pi i)^{1-k}
\sum_{u=0}^{c-1}
\omega_0 (u) {\rm exp}(-2\pi i u/c),
$$
where $c$ is the conductor of $\omega$, 
$\omega_0$ is the primitive character associated to $\omega$, and 
by $\sim$ we mean up to an element of ${\mathbb Q}(\varphi)$. 
We will denote the right hand side by $\delta(\omega)$. For brevity, we 
will
denote $c^{\pm}(M(\varphi))$ by $c^{\pm}(\varphi)$.
Recall \cite[Definition 1.3]{deligne} that an integer $m$ is 
{\it critical} for
any motivic $L$-function $L(s,M)$ if
both $L_{\infty}(s,M)$ and $L_{\infty}(1-s, M^{\vee})$ are regular at 
$s=m$. We now state Deligne's conjecture \cite[Section 7]{deligne}
on the special values of the symmetric power
$L$-functions.

\begin{con}
\label{conj:deligne}
Let $\varphi$ be a primitive form in $S_k(N,\omega)$.
There exist nonzero complex numbers $c^{\pm}(\varphi)$ such that
\begin{enumerate}
\item If $m$ is a critical integer for $L_f(s, {\rm Sym}^{2l+1}\varphi)$, 
then
$$
L_f(m, {\rm Sym}^{2l+1}\varphi) \sim
(2\pi i)^{m(l+1)}\, c^{\pm}(\varphi)^{(l+1)(l+2)/2}\,
c^{\mp}(\varphi)^{l(l+1)/2}\, \delta(\omega)^{l(l+1)/2},
$$
where $\pm = (-1)^m$.

\item If $m$ is a critical integer for $L_f(s, {\rm Sym}^{2l}\varphi)$,
then
$$
L_f(m, {\rm Sym}^{2l}\varphi) \sim
\left\{\begin{array}{ll}
(2\pi i)^{m(l+1)}\,
(c^+(\varphi)c^-(\varphi))^{l(l+1)/2}\,
\delta(\omega)^{l(l+1)/2}
& \mbox{if $m$ is even,} \\
(2\pi i)^{ml}\,
(c^+(\varphi)c^-(\varphi))^{l(l+1)/2}\,
\delta(\omega)^{l(l-1)/2}
& \mbox{if $m$ is odd.}
\end{array}\right.
$$
\end{enumerate}
By $\sim$ we mean up to an element of ${\mathbb Q}(\varphi)$.
\end{con}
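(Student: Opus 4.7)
The plan is to derive Conjecture~\ref{conj:deligne} by combining two ingredients: Deligne's general conjecture for critical values of motivic $L$-functions, namely
$$
L_f(m, M) \;\sim\; (2\pi i)^{m\, d^\pm(M)}\, c^\pm(M), \qquad \pm = (-1)^m,
$$
applied to the motive $M = {\rm Sym}^n(M(\varphi))$, together with Deligne's Proposition~7.7 of \cite{deligne} which expresses $c^\pm({\rm Sym}^n(M(\varphi)))$ in terms of $c^\pm(M(\varphi))$ and the Gauss-sum factor $\delta(\omega)$.

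First I would carry out the Hodge-theoretic bookkeeping. The rank two motive $M(\varphi)$ has Hodge decomposition $H^{k-1,0}\oplus H^{0,k-1}$ with $F_\infty$ interchanging the two summands, so $d^+=d^-=1$. Choose a Betti basis $e_+, e_-$ with $F_\infty e_\pm = \pm e_\pm$. The Betti realization of ${\rm Sym}^n(M(\varphi))$ then has basis $\{e_+^{\,i}e_-^{\,n-i} : 0\le i \le n\}$, on which $F_\infty$ acts by $(-1)^{n-i}$. Counting gives $d^+ = d^- = l+1$ when $n=2l+1$, and $d^+ = l+1$, $d^- = l$ when $n=2l$. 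Consequently the predicted exponent of $2\pi i$ equals $m(l+1)$ in the odd case; in the even case it equals $m(l+1)$ when $m$ is even (using the $+$ eigenspace) and $m\,l$ when $m$ is odd (using the $-$ eigenspace). This matches the powers of $2\pi i$ in both parts of the statement.

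Next I would substitute the explicit period formulas from \cite[Proposition~7.7]{deligne}. These express $c^\pm({\rm Sym}^n(M(\varphi)))$ as a product of powers of $c^+(\varphi)$, $c^-(\varphi)$, and $\delta(M(\varphi)) \sim (2\pi i)^{1-k}\delta(\omega)$, with exponents determined by the combinatorics of the Hodge filtration on ${\rm Sym}^n M(\varphi)$ and the sign $(-1)^{n-i}$ on each line of the Betti decomposition. For $n=2l+1$ this reorganization produces the exponents $(l+1)(l+2)/2$ and $l(l+1)/2$ on the pair $c^\pm(\varphi), c^\mp(\varphi)$, together with $\delta(\omega)^{l(l+1)/2}$; for $n=2l$ a parallel calculation yields the stated exponents $l(l+1)/2$ on $c^+(\varphi)c^-(\varphi)$ and either $l(l+1)/2$ or $l(l-1)/2$ on $\delta(\omega)$, after absorbing the $(2\pi i)^{1-k}$ factors from $\delta(M(\varphi))$ into the overall power of $2\pi i$ via the identity $d^\pm({\rm Sym}^n M(\varphi)) = d^\pm$ computed above. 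At this stage Conjecture~\ref{conj:deligne} follows from Deligne's conjecture.

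The genuine obstacle is that Deligne's conjecture itself is open for the motives ${\rm Sym}^n M(\varphi)$ once $n\ge 2$, so the reduction above is only a plausibility argument. For an unconditional proof one must construct the critical values automorphically. The natural route is to invoke functoriality to realize ${\rm Sym}^n(\pi)$ as an automorphic representation of ${\rm GL}_{n+1}$ (available for $n\le 4$ by the works of Gelbart--Jacquet, Kim--Shahidi, and Kim, and for all $n$ when $\pi$ is dihedral or polyhedral), and then to extract critical values cohomologically from Mahnkopf's Rankin--Selberg construction on ${\rm GL}_{n+1}\times{\rm GL}_n$ reviewed in \S\ref{sec:mahnkopf}. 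This produces a pair of automorphic periods $p^\pm$ attached to the symmetric power lift. The hard part is to match $p^\pm$ with Deligne's motivic periods $c^\pm(\varphi)$ up to ${\mathbb Q}(\varphi)$-multiples: this period comparison, complicated further by the conditional archimedean nonvanishing hypothesis on Rankin--Selberg $L$-factors, is precisely where the approach currently falls short of the exact formulae, and is why the paper contents itself with the twisted-versus-untwisted relation Conjecture~\ref{con:twisted}.
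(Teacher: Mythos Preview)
Your proposal is essentially correct and matches the paper's own treatment: the statement is a \emph{conjecture}, not a theorem, and the paper does not prove it. What the paper does is exactly what you outline in your first two paragraphs---it presents Conjecture~\ref{conj:deligne} as the specialization of Deligne's general conjecture $L_f(m,M)\sim (2\pi i)^{m d^\pm(M)} c^\pm(M)$ to $M={\rm Sym}^n M(\varphi)$, with the explicit exponents obtained by invoking \cite[Proposition~7.7]{deligne} to express $c^\pm({\rm Sym}^n M(\varphi))$ in terms of $c^\pm(\varphi)$ and $\delta(\omega)$. Your Hodge-theoretic bookkeeping for $d^\pm$ and the period combinatorics is the content behind the paper's one-line reference to that proposition.

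Your third paragraph, noting that Deligne's conjecture is open for ${\rm Sym}^n M(\varphi)$ and sketching the automorphic route via functoriality and Mahnkopf's construction together with its archimedean nonvanishing obstacle, is also well aligned with the paper: this is precisely the content of \S\ref{sec:mahnkopf} and the motivation for formulating the weaker Conjecture~\ref{con:twisted} in lieu of the full period statement. So there is no gap in your understanding; you have correctly identified both the derivation of the conjectural formulae and the reason they remain conjectural.
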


We wish to emphasize that in the original conjecture of Deligne, 
the numbers $c^{\pm}$ are {\it periods}, 
but in this paper they are just a couple of complex numbers in terms of which the 
critical values of the symmetric power $L$-functions can be expressed. 

It adds some clarity to write down explicitly the statement
of the conjecture for the $n$-th symmetric power,
in the special cases $n=1,2,3,4$, and while doing so we also discuss about 
how much is known in these cases.

Let $m$ be a critical integer for $L_f(s, \varphi)$. Then 
Conjecture~\ref{conj:deligne} takes the form
\begin{equation}
\label{eqn:deligne-sym1}
L_f(m, \varphi) \sim
(2\pi i)^m c^{\pm}(\varphi),
\end{equation}
where $\pm = (-1)^m$.
In this context, the conjecture is known and is a theorem of Shimura 
\cite{shimura2} \cite{shimura3}. Shimura relates the required special 
values to quotients of certain Petersson inner products, whose rationality
properties can be studied.

Let $m$ be a critical integer for $L_f(s, {\rm Sym}^2 \varphi)$. Then
Conjecture~\ref{conj:deligne} takes the form
\begin{equation}
\label{eqn:deligne-sym2}
L_f(m, {\rm Sym}^2 \varphi) \sim
\left\{\begin{array}{ll}
(2\pi i)^{2m} (c^+(\varphi)c^-(\varphi)) \delta(\omega) &
\mbox{if m is even,} \\
(2\pi i)^m (c^+(\varphi)c^-(\varphi)) & \mbox{if m is odd.}
\end{array}\right.
\end{equation}
The conjecture is known in this case and is due to 
Sturm \cite{sturm1} \cite{sturm2}. Sturm uses an integral representation 
for the symmetric square $L$-function due to Shimura \cite{shimura1}.

Let $m$ be a critical integer for $L_f(s, {\rm Sym}^3 \varphi)$. Then
Conjecture~\ref{conj:deligne} takes the form
\begin{equation}
\label{eqn:deligne-sym3}
L_f(m, {\rm Sym}^3 \varphi) \sim
(2\pi i)^{2m} c^{\pm}(\varphi)^3 c^{\mp}(\varphi)\delta(\omega),
\end{equation}
where $\pm = (-1)^m$.
The conjecture is known in this case and is due to Garrett and Harris 
\cite{garrett-harris}. The main thrust of that paper is 
to prove a theorem on the special values of certain triple product 
$L$-functions $L(s, \varphi_1 \times \varphi_2 \times \varphi_3)$. 
Deligne's conjecture for motivic $L$-functions predicts the 
special values of such
triple product $L$-functions, for which an excellent reference is 
Blasius \cite{blasius1}. Via a standard argument, the case 
$\varphi_1 = \varphi_2 = \varphi_3 = \varphi$, 
gives the special values of the 
symmetric cube $L$-function for $\varphi$. 
This was reproved by 
Kim and Shahidi \cite{kim-shahidi-israel} emphasizing finiteness of 
these $L$-values which follows from their earlier work 
\cite{kim-shahidi-annals1}.

Let $m$ be a critical integer for $L_f(s, {\rm Sym}^4 \varphi)$. Then
Conjecture~\ref{conj:deligne} takes the form
\begin{equation}
\label{eqn:deligne-sym4}
L_f(m, {\rm Sym}^4 \varphi) \sim \left\{\begin{array}{ll}
(2\pi i)^{3m} (c^+(\varphi)c^-(\varphi))^3 \delta(\omega)^3 &
\mbox{if m is even,} \\
(2\pi i)^{2m}(c^+(\varphi)c^-(\varphi))^3 \delta(\omega)  &
\mbox{if m is odd.}
\end{array}\right.
\end{equation}
In general the conjecture is not known for higher ($n \geq 4$) symmetric
power $L$-functions. Although, if $\varphi$ is dihedral, then we have 
verified the conjecture for any symmetric power; see 
\S\ref{sec:dihedral}. 

We remark that a prelude to this conjecture was certain calculations 
made by Zagier \cite{zagier} wherein he showed that such a statement 
holds for the $n$-th symmetric power $L$-function, with $n \leq 4$, of 
the Ramanujan $\Delta$-function.

\subsection{Critical points}

As recalled above, an integer $m$ is
{\it critical} for any motivic $L$-function $L(s,M)$ if
both $L_{\infty}(s,M)$ and $L_{\infty}(1-s, M^{\vee})$ are regular at
$s=m$. For example, if $M = {\mathbb Z}(0) = H^*(\mbox{Point})$ 
be the trivial motive, then $L(s, M)$ is the Riemann zeta function 
$\zeta(s)$ \cite[\S 3.2]{deligne}. Then 
$L_{\infty}(s, M) = {\bf \pi}^{-s/2}\Gamma(s/2)$. It is an easy exercise 
to see that an integer $m$ is critical for $\zeta(s)$ if $m$ is an
even positive integer or an odd negative integer. 
More generally, as in Blasius \cite{blasius1}, one can 
calculate the critical points for any motivic $L$-function 
in terms of the Hodge numbers of the corresponding motive. 
For the specific $L$-functions at hand, namely the symmetric power 
$L$-functions, one explicitly knows the $L$-factors at infinity
\cite{moreno-shahidi} 
using which it is a
straightforward exercise to calculate the critical points. 
In the following two lemmas 
we record the critical points of the $n$-th
symmetric power $L$-function associated to a modular form $\varphi$.
(For more details see \cite{raghuram-shahidi}.)

\begin{lemma}
\label{lem:critical-sym-2r+1}
Let $\varphi$ be a primitive cusp form of weight $k$. The set of critical integers
for $L_f(s, {\rm Sym}^{2r+1} \varphi)$ is given by integers $m$ with
$$
r(k-1)+1 \leq m \leq (r+1)(k-1).
$$
\end{lemma}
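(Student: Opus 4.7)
The plan is to exploit the definition of criticality directly: compute the archimedean $L$-factors $L_\infty(s, \mathrm{Sym}^{2r+1}\varphi)$ and $L_\infty(1-s, \mathrm{Sym}^{2r+1}\varphi^\vee)$ explicitly, read off their pole sets, and intersect the complementary regions on $\mathbb{Z}$.

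First I would pin down the archimedean Langlands parameter of $\varphi$. Since $\varphi$ is holomorphic of weight $k$, the representation $\pi_\infty$ is the (limit of) discrete series of weight $k$, and $\sigma(\pi_\infty) = \mathrm{Ind}_{W_\mathbb{C}}^{W_\mathbb{R}}\chi_{k-1}$ with $\chi_{k-1}(z) = (z/\bar z)^{(k-1)/2}$. Passing to the motivic normalization (equivalently, using the Hodge decomposition of $M(\varphi)$, which has types $(k-1,0)$ and $(0,k-1)$), I would then decompose $\mathrm{Sym}^{2r+1}\sigma(\pi_\infty)$: since $n = 2r+1$ is odd, the resulting $(n{+}1)$-dimensional parameter has no $W_\mathbb{R}$-fixed line, and it splits into $r+1$ two-dimensional summands, indexed by $\ell = 0,1,\dots,r$, each induced from a character of $W_\mathbb{C}$ whose effect in the Hodge picture is the pair of types $\bigl(\ell(k-1),\,(2r+1-\ell)(k-1)\bigr)$ and its mirror. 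Applying the standard dictionary (cf.\ Moreno--Shahidi \cite{moreno-shahidi}) gives
\begin{equation*}
L_\infty(s, \mathrm{Sym}^{2r+1}\varphi) \;=\; \prod_{\ell=0}^{r} \Gamma_\mathbb{C}\!\bigl(s - \ell(k-1)\bigr).
\end{equation*}

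Next I would locate the poles. The factor $\Gamma_\mathbb{C}(s - \ell(k-1))$ has simple poles at the integers $s \le \ell(k-1)$; taking the union over $\ell = 0,1,\dots,r$, the product has poles at precisely the integers $m \le r(k-1)$. Hence $L_\infty(s,\mathrm{Sym}^{2r+1}\varphi)$ is regular at the integer $m$ iff $m \ge r(k-1)+1$. For the dual side, $\varphi^\vee$ differs from $\varphi$ by a finite-order twist, so the Hodge types of $\mathrm{Sym}^{2r+1} M(\varphi)^\vee$ are the negatives of those of $\mathrm{Sym}^{2r+1} M(\varphi)$, and the same recipe produces
\begin{equation*}
L_\infty(s,\mathrm{Sym}^{2r+1}\varphi^\vee) \;=\; \prod_{b=r+1}^{2r+1} \Gamma_\mathbb{C}\!\bigl(s + b(k-1)\bigr),
\end{equation*}
whose value $L_\infty(1-s, \mathrm{Sym}^{2r+1}\varphi^\vee)$ has poles at the integers $m \ge 1 + b(k-1)$ for some $b \in \{r{+}1,\dots,2r{+}1\}$, i.e.\ at all integers $m \ge (r+1)(k-1)+1$. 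So this side is regular at $m$ iff $m \le (r+1)(k-1)$.

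Intersecting the two ranges yields the asserted set $r(k-1)+1 \le m \le (r+1)(k-1)$. The main thing to get right is bookkeeping: the normalization shift between automorphic and motivic conventions, and the decomposition of $\mathrm{Sym}^n$ of a two-dimensional induced parameter into its $W_\mathbb{C}$-induced summands with the correct Hodge weights. Once those two inputs are clean, identifying the pole loci of a product of $\Gamma_\mathbb{C}$-factors is straightforward, and the equality $r(k-1)+1 \le (r+1)(k-1)$ becomes nonempty exactly when $k \ge 2$, consistent with the observation that holomorphic weight one forms contribute no critical points.
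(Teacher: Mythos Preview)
Your proposal is correct and follows exactly the approach the paper indicates: the paper does not give a detailed proof but says that one explicitly knows the $L$-factors at infinity from \cite{moreno-shahidi}, after which computing the critical set is a ``straightforward exercise,'' deferring details to \cite{raghuram-shahidi}. Your write-up carries out precisely this exercise --- decompose $\mathrm{Sym}^{2r+1}$ of the two-dimensional archimedean parameter into $r+1$ induced summands, read off the resulting product of $\Gamma_{\mathbb C}$-factors, and intersect the regularity ranges --- and the bookkeeping is clean.
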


\begin{lemma}
\label{lem:critical-sym-2r}
Let $\varphi$ be a primitive cusp form of weight $k$. The set of critical integers
for $L_f(s, {\rm Sym}^{2r} \varphi)$ is given below. 
{\small
\begin{enumerate}
\item If $r$ is odd and $k$ is even, then
$$
\{(r-1)(k-1)+1, (r-1)(k-1)+3,\dots, r(k-1);\  r(k-1)+1, r(k-1)+3,\dots, (r+1)(k-1)\}.
$$
\item If $r$ and $k$ are both odd, then
$$
\{(r-1)(k-1)+1, (r-1)(k-1)+3,\dots, r(k-1)-1;\  r(k-1)+2, r(k-1)+4,\dots, (r+1)(k-1)\}.
$$
\item If $r$ and $k$ are both even, then
$$
\{(r-1)(k-1)+2, (r-1)(k-1)+4,\dots, r(k-1)-1;\  r(k-1)+2, r(k-1)+4,\dots, (r+1)(k-1)-1\}.
$$
\item If $r$ is even and $k$ is odd, then
$$
\{(r-1)(k-1)+1, (r-1)(k-1)+3,\dots, r(k-1)-1;\  r(k-1)+2, r(k-1)+4,\dots, (r+1)(k-1)\}.
$$
\end{enumerate}}
\end{lemma}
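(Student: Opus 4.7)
The plan is to read off the critical integers from the explicit shape of $L_\infty(s, \mathrm{Sym}^{2r}\varphi)$. Using the Langlands parameter at infinity of $\pi_\varphi$ together with the symmetric-power calculation recorded in \cite{moreno-shahidi}, one obtains, in Deligne's motivic normalization,
\[
L_\infty(s, \mathrm{Sym}^{2r}\varphi) = \prod_{i=0}^{r-1} \Gamma_{\mathbb{C}}(s - i(k-1))\cdot \Gamma_{\mathbb{R}}(s - r(k-1) + \epsilon),
\]
where $\epsilon \in \{0,1\}$ records the sign of the Frobenius action on the middle $(p,p)$-Hodge piece. A short computation with $\phi_\infty(j) = \bigl(\begin{smallmatrix}0 & 1\\(-1)^{k-1} & 0\end{smallmatrix}\bigr)$ acting on the middle weight vector $e_1^r e_2^r$ shows this sign is $(-1)^{r(k-1)}$, so $\epsilon = r(k-1) \bmod 2$; equivalently, $\epsilon = 1$ exactly when $r$ is odd and $k$ is even.

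Since $\mathrm{Sym}^{2r}M(\varphi)$ is self-dual up to a Tate twist by its weight $w = 2r(k-1)$, the functional equation at infinity reads $L_\infty(1-s, (\mathrm{Sym}^{2r}\varphi)^\vee) = L_\infty(w+1-s, \mathrm{Sym}^{2r}\varphi)$, and so $m \in \mathbb{Z}$ is critical iff $L_\infty(s, \mathrm{Sym}^{2r}\varphi)$ is regular at both $s = m$ and $s = 2r(k-1)+1-m$. The factors $\Gamma_{\mathbb{C}}(s - i(k-1))$ are regular at integer $s$ iff $s \ge i(k-1)+1$; imposing this condition and its functional-equation mirror forces $(r-1)(k-1)+1 \le m \le (r+1)(k-1)$. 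The factor $\Gamma_{\mathbb{R}}(s - r(k-1)+\epsilon)$ has poles at $s \in \{r(k-1)-\epsilon,\, r(k-1)-\epsilon-2,\dots\}$, all even because $r(k-1)-\epsilon$ is even by the definition of $\epsilon$; so regularity at $s=m$ excludes even $m \le r(k-1)-\epsilon$, and regularity at $s = 2r(k-1)+1-m$ excludes odd $m \ge r(k-1)+1+\epsilon$.

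Combining these constraints, the critical integers consist of: (i) the odd $m$ in $[(r-1)(k-1)+1,\, r(k-1)-\epsilon]$; (ii) any integers in the middle gap $\bigl(r(k-1)-\epsilon,\, r(k-1)+1+\epsilon\bigr)$, which is nonempty only when $\epsilon = 1$ and then contains exactly $r(k-1)$ and $r(k-1)+1$; and (iii) the even $m$ in $[r(k-1)+1+\epsilon,\, (r+1)(k-1)]$. Splitting according to the parities of $r$ and $k$ simultaneously fixes $\epsilon$ and the parities of the endpoints $(r\pm 1)(k-1)$ and $r(k-1)$, yielding the four enumerations in the statement. The substantive input is the determination of the sign $\epsilon$; the remainder is a mechanical parity check.
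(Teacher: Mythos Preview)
Your argument is correct and follows exactly the route the paper indicates: the paper does not prove this lemma in the text but simply remarks that the critical points are a ``straightforward exercise'' from the explicit archimedean factors in \cite{moreno-shahidi}, deferring details to \cite{raghuram-shahidi}. You have carried out precisely that exercise, and your identification of the sign $\epsilon \equiv r(k-1)\pmod 2$ matches the paper's own computation (in the proof of Theorem~\ref{thm:symmetric-cohomology}) that the one-dimensional summand of $\Pi_\infty$ is $\mathrm{sgn}^{r(k-1)}$.
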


\begin{rem}
\label{rem:easy}
{\rm Here are some easy observations based on the above lemmas. 
\begin{enumerate}
\item If $k=1$ then $L_f(s, {\rm Sym}^n \varphi)$ does not have any critical points
for any $n \geq 1$. In particular, this is the case if $\varphi$ is a cusp form which is 
tetrahedral, octahedral or icosahedral \cite{raghuram-shahidi}. 
\item If $k=2$ then $L_f(s, {\rm Sym}^n \varphi)$ has a critical point if and only 
if $n$ is not a multiple of $4$; further $L_f(s, {\rm Sym}^{2r+1} \varphi)$ has exactly 
one critical point $m=r+1$; and if $r$ is odd 
$L_f(s, {\rm Sym}^{2r} \varphi)$ has two critical points $r,r+1$. 
This applies in particular for symmetric power $L$-functions of elliptic curves. 
\item Let $m$ be a critical integer for 
$L_f(s, {\rm Sym}^{2r} \varphi)$. Then $m$ is even if and only if $m$ is to the
right of the center of symmetry.
\end{enumerate}

}
\end{rem}

\section{Dihedral calculations}
\label{sec:dihedral}

A primitive form $\varphi$ is said to be {\it dihedral} if the associated
cuspidal automorphic representation of 
${\rm GL}_2({\mathbb A}_{\mathbb Q})$, denoted $\pi(\varphi)$, is the 
automorphic induction of an id\`ele class character, say $\chi$, of a 
quadratic extension $K/{\mathbb Q}$. This is denoted as
$\pi(\varphi) = {\rm AI}_{K/{\mathbb Q}}(\chi)$.
(Since $\varphi$ is a holomorphic modular form,
in this situation, $K$ is necessarily an imaginary quadratic extension.)
In \cite{raghuram-shahidi} we give a proof of
Deligne's conjecture for the special values of symmetric power L-functions
for such dihedral forms using purely the language of L-functions. (See
the last paragraph of this section.)
In this section we summarize the main results of those calculations
while referring the reader to \cite{raghuram-shahidi} for all the proofs.

Recall from Remark~\ref{rem:easy} that if the weight $k=1$ then there
are no critical integers for $L_f(s, {\rm Sym}^n \varphi)$. It is easy
to see \cite{raghuram-shahidi} that if
$\pi(\varphi) = {\rm AI}_{K/{\mathbb Q}}(\chi)$ and some nonzero power of
$\chi$ is Galois invariant (under the Galois group of $K/{\mathbb Q}$)
then $k=1$. Hence we may, and henceforth shall,
assume that for every nonzero integer $n$, $\chi^n$ is not Galois 
invariant. The following lemma describes the isobaric decomposition of a 
symmetric power lifting of a dihedral cusp form.

\begin{lemma}
\label{lem:isobaric-dihedral}
Let $\chi$ be an id\`ele class character of an imaginary quadratic 
extension $K/{\mathbb Q}$; assume that 
$\chi^n$ is not Galois invariant for any nonzero integer $n$. Let
$\chi_{\mathbb Q}$ denote
the restriction of $\chi$ to the id\`eles of ${\mathbb Q}$.
Then we have
\begin{eqnarray*}
{\rm Sym}^{2r}({\rm AI}_{K/{\mathbb Q}}(\chi)) & = &
\boxplus_{a=0}^{r-1}{\rm AI}_{K/{\mathbb Q}}(\chi^{2r-a}\chi'^a)
\boxplus \chi_{\mathbb Q}^r,  \\
{\rm Sym}^{2r+1}({\rm AI}_{K/{\mathbb Q}}(\chi)) & = &
\boxplus_{a=0}^{r}{\rm AI}_{K/{\mathbb Q}}(\chi^{2r+1-a}\chi'^a),
\end{eqnarray*}
where $\chi'$ is the nontrivial Galois conjugate of $\chi$.
\end{lemma}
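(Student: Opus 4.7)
The plan is to transfer the problem to the Galois/Weil--group side via the local Langlands correspondence, where symmetric powers and automorphic induction become the familiar operations $\mathrm{Sym}^{n}$ and $\mathrm{Ind}_{W_K}^{W_\mathbb{Q}}$ on finite-dimensional representations. Because an isobaric automorphic representation is determined by its collection of local Langlands parameters, the stated decomposition will follow once we establish the analogous decomposition of the Weil--group representation $\mathrm{Sym}^{n}(\mathrm{Ind}_{W_K}^{W_\mathbb{Q}}(\chi))$.

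First restrict $V:=\mathrm{Ind}_{W_K}^{W_\mathbb{Q}}(\chi)$ to $W_K$, where it decomposes as $\chi\oplus\chi'$, so
$$
\mathrm{Sym}^{n}(V)|_{W_K} \;=\; \bigoplus_{a=0}^{n}\chi^{a}(\chi')^{n-a}.
$$
The nontrivial element of $\mathrm{Gal}(K/\mathbb{Q})$ permutes these one-dimensional summands by $\chi^{a}(\chi')^{n-a}\leftrightarrow\chi^{n-a}(\chi')^{a}$. The standing assumption that no nonzero power of $\chi$ is Galois-invariant guarantees that each two-element orbit assembles into an irreducible induced $W_\mathbb{Q}$-representation $\mathrm{Ind}_{W_K}^{W_\mathbb{Q}}(\chi^{n-a}\chi'^{a})$. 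For $n=2r+1$ every orbit has size two and we obtain the odd case exactly as stated. For $n=2r$ the orbit $\{r,r\}$ is fixed, so it contributes a one-dimensional sub-representation $W$ of $W_\mathbb{Q}$ whose restriction to $W_K$ equals $\chi^{r}(\chi')^{r}$.

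The only delicate point is to identify $W$. Since $(\chi\chi')(x)=\chi(N_{K/\mathbb{Q}}(x))$, class field theory yields $\chi\chi'=\chi_\mathbb{Q}|_{W_K}$, so $W$ must be one of the two extensions $\chi_\mathbb{Q}^{r}$ or $\chi_\mathbb{Q}^{r}\cdot\eta_{K/\mathbb{Q}}$, where $\eta_{K/\mathbb{Q}}$ denotes the quadratic character cutting out $K$. I would pin down the correct extension by a determinant comparison: from the standard formula $\det(V)=\chi_\mathbb{Q}\cdot\eta_{K/\mathbb{Q}}$ one gets $\det(\mathrm{Sym}^{2r}V)=(\chi_\mathbb{Q}\,\eta_{K/\mathbb{Q}})^{r(2r+1)}$, while each of the $r$ induced factors $\mathrm{Ind}(\chi^{2r-a}\chi'^{a})$ has determinant $\chi_\mathbb{Q}^{2r}\,\eta_{K/\mathbb{Q}}$. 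Dividing these, and using $\eta_{K/\mathbb{Q}}^{2}=1$ together with the parity of the exponents, forces $W=\chi_\mathbb{Q}^{r}$. The main obstacle is precisely this last bookkeeping step: one must handle the Hilbert--$90$-type ambiguity in extending a Galois-invariant character of $W_K$ to $W_\mathbb{Q}$, but the fact that $\eta_{K/\mathbb{Q}}$ has order $2$ makes the accounting work out cleanly.
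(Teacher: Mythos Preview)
Your argument is correct. The paper itself does not give a proof of this lemma: the surrounding text explicitly states that all proofs in \S\ref{sec:dihedral} are deferred to the companion paper \cite{raghuram-shahidi}. So there is no in-paper proof to compare against, but your approach is precisely the standard one and is almost certainly what the authors have in mind: pass to the Weil--group side (which is legitimate because $\mathrm{Sym}^n(\pi)$ is \emph{defined} place by place via local Langlands in \S\ref{sec:functorial}), restrict $\mathrm{Ind}_{W_K}^{W_{\mathbb Q}}(\chi)$ to $W_K$ as $\chi\oplus\chi'$, and apply Clifford/Mackey theory to the $\mathrm{Gal}(K/\mathbb{Q})$-orbits on the characters $\chi^{a}\chi'^{\,n-a}$. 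The hypothesis that no nonzero power of $\chi$ is Galois-invariant is exactly what forces these characters to be pairwise distinct and the size-two orbits to induce irreducibly.

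Your determinant bookkeeping for the fixed summand when $n=2r$ is also right: from $\det V=\chi_{\mathbb Q}\,\eta_{K/\mathbb Q}$ one gets $\det(\mathrm{Sym}^{2r}V)=\chi_{\mathbb Q}^{\,2r^2+r}\eta_{K/\mathbb Q}^{\,r}$, while the $r$ two-dimensional induced pieces together contribute $\chi_{\mathbb Q}^{\,2r^2}\eta_{K/\mathbb Q}^{\,r}$, so the leftover line is $\chi_{\mathbb Q}^{\,r}$ on the nose. One could alternatively avoid the determinant by directly computing the action of a representative $s\in W_{\mathbb Q}\setminus W_K$ on the monomial $e^{r}(se)^{r}\in\mathrm{Sym}^{2r}V$, but your route is cleaner. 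The only place where you might tighten the exposition is the passage from ``orbits of size two'' to ``irreducible induced subrepresentation'': since the restriction to $W_K$ is multiplicity-free, Clifford theory immediately gives that each irreducible $W_{\mathbb Q}$-constituent restricts to a single orbit, and Frobenius reciprocity then identifies it with the induction; this is routine, but worth a sentence.
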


Note that every isobaric summand above is
either cuspidal or is one dimensional. This lemma
can be recast in terms of $L$-functions.
For an id\`ele class character $\chi$ of an imaginary quadratic
extension $K/{\mathbb Q}$, we let
$\varphi_{\chi}$ denote the primitive cusp form such that
$\pi(\varphi_{\chi}) = {\rm AI}_{K/{\mathbb Q}}(\chi)$. If
$\varphi_{\chi} \in S_k(N,\omega)$
then $\omega\omega_K = \chi_{\mathbb Q}$, where we make the obvious 
identification
of classical Dirichlet characters and id\`ele class characters 
of ${\mathbb Q}$, and $\omega_K$ denotes the quadratic id\`ele class
character of ${\mathbb Q}$ associated to $K$ via
global class field theory.

\begin{lemma}
\label{lem:dihedral-sym^n-l-fns}
The symmetric power $L$-functions of $\varphi_{\chi}$ decompose as
follows:
\small{
\begin{eqnarray*}
L_f(s, {\rm Sym}^{2r} \varphi_{\chi})
& = &
L_f(s-r(k-1), (\omega\omega_K)^r)
\prod_{a=0}^{r-1} L_f(s - a(k-1), \varphi_{\chi^{2(r-a)}}, \omega^a) \\
& = &
L_f(s-r(k-1), (\omega\omega_K)^r)
\prod_{a=0}^{r-1} L_f(s - a(k-1), \varphi_{\chi^{2(r-a)}},
(\omega \omega_K)^a). \\
L_f(s, {\rm Sym}^{2r+1} \varphi_{\chi})
& = &
\prod_{a=0}^{r} L_f(s - a(k-1), \varphi_{\chi^{2(r-a)+1}}, \omega^a) \\
& = &
\prod_{a=0}^{r} L_f(s - a(k-1), \varphi_{\chi^{2(r-a)+1}},
(\omega \omega_K)^a).
\end{eqnarray*}
}
\end{lemma}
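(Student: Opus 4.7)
The plan is to derive both decomposition formulas directly from the isobaric decomposition in Lemma~\ref{lem:isobaric-dihedral} using multiplicativity of $L$-functions on isobaric sums, together with the projection formula for automorphic induction,
\[
\mathrm{AI}_{K/\mathbb{Q}}(\mu \cdot (\eta \circ N_{K/\mathbb{Q}})) = \mathrm{AI}_{K/\mathbb{Q}}(\mu) \otimes \eta.
\]
The key preliminary fact is that $\chi\chi'$ is the base change of $\chi_\mathbb{Q}$ along the norm: since $\chi\chi'(y) = \chi(y)\chi(\sigma y) = \chi(y\sigma y)$ and $y\sigma y = N_{K/\mathbb{Q}}(y) \in \mathbb{A}_\mathbb{Q}^\times$, we obtain $\chi\chi' = \chi_\mathbb{Q} \circ N_{K/\mathbb{Q}}$. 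Consequently $\chi^{2r-a}\chi'^a = \chi^{2(r-a)}\cdot(\chi_\mathbb{Q}^a\circ N_{K/\mathbb{Q}})$, and the projection formula yields
\[
\mathrm{AI}_{K/\mathbb{Q}}(\chi^{2r-a}\chi'^a) = \pi(\varphi_{\chi^{2(r-a)}})\otimes \chi_\mathbb{Q}^a,
\]
with the parallel identity in the odd case.

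Next I would pass from automorphic to classical $L$-functions. The normalization used in the paper satisfies $L_f(s,\varphi) = L(s-(k-1)/2, \pi(\varphi))$, so
\[
L_f(s, \mathrm{Sym}^n \varphi_\chi) = L(s - n(k-1)/2, \mathrm{Sym}^n \pi(\varphi_\chi)).
\]
For each summand $\pi(\varphi_{\chi^m})\otimes\chi_\mathbb{Q}^a$, the form $\varphi_{\chi^m}$ has weight $m(k-1)+1$ (because $\chi$ has infinity type $(z/|z|)^{k-1}$, so $\chi^m$ has infinity type $(z/|z|)^{m(k-1)}$). Converting back to classical normalization introduces a shift of $+m(k-1)/2$, producing the net shift
\[
s \longmapsto s - \tfrac{(n-m)(k-1)}{2} = s - a(k-1),
\]
since $n-m = 2a$ in both the even case ($n=2r$, $m=2(r-a)$) and the odd case ($n = 2r+1$, $m = 2(r-a)+1$). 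The one-dimensional summand $\chi_\mathbb{Q}^r=(\omega\omega_K)^r$ in the even case contributes the Dirichlet factor $L_f(s-r(k-1),(\omega\omega_K)^r)$.

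Finally, the equality of the two forms of each decomposition (twist by $\omega^a$ versus $(\omega\omega_K)^a$) follows from $\omega_K\circ N_{K/\mathbb{Q}}=1$ on $\mathbb{A}_K^\times$ by class field theory. The projection formula then gives
\[
\pi(\varphi_{\chi^m})\otimes\omega_K = \mathrm{AI}_{K/\mathbb{Q}}(\chi^m \cdot (\omega_K\circ N_{K/\mathbb{Q}})) = \pi(\varphi_{\chi^m}),
\]
so twisting any $\pi(\varphi_{\chi^m})$ by any power of $\omega_K$ is inert, and the twists by $\omega^a$ and by $(\omega\omega_K)^a = \chi_\mathbb{Q}^a$ (which differ precisely by $\omega_K^a$) produce the same $L$-function. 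The main obstacle is purely bookkeeping: the conceptual content is already packaged in Lemma~\ref{lem:isobaric-dihedral} and the projection formula, but one has to track several classical-versus-automorphic normalization shifts, verify the cancellation $n-m=2a$, and confirm that the powers of $(k-1)$ and the characters $\omega_K$ line up on both sides. This is routine but error-prone.
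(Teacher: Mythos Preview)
Your proposal is correct and follows exactly the approach indicated in the paper: the authors present this lemma as a recasting of Lemma~\ref{lem:isobaric-dihedral} in terms of $L$-functions, deferring all details to \cite{raghuram-shahidi}, and your argument carries out precisely that translation via the projection formula, the identity $\chi\chi'=\chi_{\mathbb Q}\circ N_{K/\mathbb Q}$, and the normalization shift $L_f(s,{\rm Sym}^n\varphi)=L(s-n(k-1)/2,{\rm Sym}^n\pi(\varphi))$. The bookkeeping you flag (the shift $n-m=2a$, the weight $m(k-1)+1$ for $\varphi_{\chi^m}$, and the inertness of $\omega_K$ under the twist) is all handled correctly.
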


We can now use the results of Shimura \cite{shimura2} \cite{shimura3} and
classical theorems on special values of abelian (degree $1$) $L$-functions
for the  factors on the right hand side of the above decompositions
to prove Deligne's conjecture on the special values of
$L_f(s, {\rm Sym}^n \varphi_{\chi})$. The proof is an extended exercise in 
keeping track of various constants after one has
related the periods of the cusp form $\varphi_{\chi^n}$ to the periods of
$\varphi_{\chi}$. We state this as the following theorem.

\begin{thm}[Period relations for dihedral forms]
\label{thm:period-relations}
For any positive integer $n$ we have the following relations:
\begin{enumerate}
\item $c^+(\varphi_{\chi^n}) \sim c^+(\varphi_{\chi})^n,$
\item $c^-(\varphi_{\chi^n}) \sim c^+(\varphi_{\chi})^n 
\mathfrak{g}(\omega_K),$
\end{enumerate}
where $\sim$ means up to an element of ${\mathbb Q}(\chi)$--the field 
generated
by the values of $\chi$, and
$\mathfrak{g}(\omega_K)$ is the Gauss sum of $\omega_K$.
\end{thm}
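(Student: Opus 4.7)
The plan is to split the proof into two steps, with part (2) deduced from part (1). For part (2), I would first observe that $\omega_K \circ N_{K/\mathbb{Q}}$ is trivial by class field theory; hence
$$\mathrm{AI}_{K/\mathbb{Q}}(\chi^n)\otimes\omega_K \;=\; \mathrm{AI}_{K/\mathbb{Q}}\bigl(\chi^n\cdot(\omega_K\circ N_{K/\mathbb{Q}})\bigr) \;=\; \mathrm{AI}_{K/\mathbb{Q}}(\chi^n),$$
so that $L_f(s,\varphi_{\chi^n},\omega_K)=L_f(s,\varphi_{\chi^n})$. Applying Shimura's theorem (the case $n=1$ of Conjecture~\ref{conj:deligne}) at any even critical integer $m$ to both identities---such $m$ exists in the critical range of $\varphi_{\chi^n}$ as soon as $n(k-1)\geq 2$---yields
$$c^+(\varphi_{\chi^n})\;\sim\;\mathfrak{g}(\omega_K)\, c^-(\varphi_{\chi^n}).$$
Since $\mathfrak{g}(\omega_K)^2\in\mathbb{Q}^\times$, this is equivalent to $c^-(\varphi_{\chi^n})\sim c^+(\varphi_{\chi^n})\,\mathfrak{g}(\omega_K)$, and then part (1) will supply the desired relation.

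For part (1), my plan is to compute one critical value of $L_f(s,\varphi_{\chi^n},\eta)$ for a suitable auxiliary Dirichlet character $\eta$ in two different ways. On the modular-form side, Shimura's theorem gives
$$L_f(m,\varphi_{\chi^n},\eta)\;\sim\;(2\pi i)^m\,\mathfrak{g}(\eta)\,c^+(\varphi_{\chi^n})$$
as long as $\eta$ and $m$ are chosen so that $\eta(-1)(-1)^m=+1$. On the abelian side, automorphic induction identifies
$$L_f(s,\varphi_{\chi^n},\eta) \;=\; L_f\bigl(s,\chi^n\cdot(\eta\circ N_{K/\mathbb{Q}})\bigr),$$
which is a Hecke $L$-function for a type-$A_0$ character of $K$. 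The classical theorem of Damerell, made precise by Shimura and Weil, then expresses its critical value as an algebraic multiple of $\mathfrak{g}(\eta)\cdot(2\pi)^{r}\cdot\Omega_K^{s}$, where $\Omega_K$ is the period of a fixed elliptic curve with complex multiplication by $\mathcal{O}_K$, and the exponents $r,s$ depend linearly on $m$ and on the infinity type of $\chi^n$.

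Equating the two expressions will isolate $c^+(\varphi_{\chi^n})$ as an algebraic multiple (modulo $\mathbb{Q}(\chi)$) of an explicit power of $\Omega_K$ and $2\pi i$. Repeating this computation with $\chi$ in place of $\chi^n$---i.e., applied to $\varphi_\chi$ itself---gives an analogous formula for $c^+(\varphi_\chi)$. Because the infinity type of $\chi^n$ is $n$ times that of $\chi$, and because the critical range $r(k-1)+1\le m \le (r+1)(k-1)$ from Lemma~\ref{lem:critical-sym-2r+1} scales compatibly, the $n$th power of the $\chi$-formula should match the $\chi^n$-formula on the nose up to factors in $\mathbb{Q}(\chi)$, proving part (1).

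The hard part will be the bookkeeping in this last matching: after cancellation one must verify that every residual factor of $2\pi i$, $\Omega_K$, $\mathfrak{g}(\eta)$, and $\mathfrak{g}(\omega_K)$ lies in $\mathbb{Q}(\chi)$. Moreover, the auxiliary character $\eta$ must be chosen both so that the parity condition $\eta(-1)(-1)^m=+1$ admits at least one critical $m$ (isolating $c^+$ rather than $c^-$), and so that the infinity-type dependence of the Damerell--Shimura formula simplifies transparently under $\chi\mapsto\chi^n$.
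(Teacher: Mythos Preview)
Your outline is plausible and should lead to a proof, but it takes a different route from the one the paper sketches. The paper's argument stays entirely within the theory of automorphic $L$-functions: it considers the Rankin--Selberg convolution $L(s,\varphi_{\chi^n}\times\varphi_{\chi})$, whose critical values are on the one hand governed by Shimura's results (and hence expressible via the periods $c^\pm(\varphi_{\chi^n})$ and $c^\pm(\varphi_\chi)$), and which on the other hand---because both factors are automorphically induced from $K$---decomposes into a product of degree-two $L$-functions attached to other powers of $\chi$. Comparing the two expressions yields a recursive relation among the $c^\pm(\varphi_{\chi^j})$. Your approach instead introduces an auxiliary transcendental quantity, the CM period $\Omega_K$, and uses Damerell's theorem to express each $c^+(\varphi_{\chi^n})$ directly as an algebraic multiple of a power of $\Omega_K$; the relation then follows by matching exponents. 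Your route is more direct and avoids induction, but imports the arithmetic of CM elliptic curves; the paper's route is deliberately ``entirely in terms of $L$-functions''---indeed the paragraph following the theorem notes that the result is already deducible from the literature via motives attached to abelian varieties, and advertises the Rankin--Selberg argument precisely as an alternative that bypasses this.

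One genuine gap in your sketch, present in both parts, is nonvanishing. To extract $c^+(\varphi_{\chi^n})\sim\mathfrak g(\omega_K)\,c^-(\varphi_{\chi^n})$ from the identity $L_f(m,\varphi_{\chi^n},\omega_K)=L_f(m,\varphi_{\chi^n})$ you need the common value to be nonzero at your chosen even $m$, and likewise the comparison in part~(1) collapses unless $L_f(m,\varphi_{\chi^n},\eta)\neq 0$. The paper explicitly flags that its own proof ``use[s] some nonvanishing results for $L$-functions,'' so this is not a formality. You should say how you secure it---for instance by taking $m$ in the region of absolute convergence of the Euler product (and checking such an even $m$ exists in the critical range), or by varying the auxiliary twist $\eta$ and invoking a suitable nonvanishing theorem.
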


As mentioned in the introduction, Deligne's conjecture for dihedral forms is known. 
This is because Deligne's main conjecture is known if one considers only the motives
as those attached to abelian varieties and the category used is that defined
by using absolute Hodge cycles for morphisms. The theorem above can therefore be
deduced from the literature. However, the proof that we have is entirely in terms of 
$L$-functions: by considering the Rankin--Selberg $L$-function for 
$\varphi_{\chi^n} \times \varphi_{\chi}$. We also use some nonvanishing results
for $L$-functions in the course of proving this theorem.

\section{Representations with cohomology}
\label{sec:cohomology}

In the study of special values of $L$-functions, if the $L$-function at 
hand is associated to a cuspidal automorphic representation, then a 
standard assumption made on the representation is that it contributes to
cuspidal cohomology. This cohomology space admits a rational structure and 
certain periods are obtained 
by comparing this rational structure with a rational structure on the 
Whittaker model of the representation at hand. This approach to 
the study of special values is originally due to Harder \cite{harder}
and since then pursued by several authors and in particular by 
Mahnkopf \cite{mahnkopf2}.  

The purpose of this section, after setting up the context, 
is to record 
Theorem~\ref{thm:symmetric-cohomology} which says that the $n$-th 
symmetric power lift of a cohomological cusp form on ${\rm GL}_2$, if 
cuspidal, contributes to cuspidal cohomology of ${\rm GL}_{n+1}$. 
This theorem is essentially due to Labesse and Schwermer 
\cite{labesse-schwermer}. We then digress a little and discuss the 
issue of functoriality and a representation being cohomological.

\subsection{Cohomological representations of ${\rm GL}_n({\mathbb R})$}

In this section we set up the context of cohomological representations. 
This is entirely standard material; we refer the reader to 
Borel-Wallach \cite{borel-wallach} and Schwermer \cite{schwermer} for 
generalities on the cohomology of representations.

We let $G_n = {\rm GL}_n$ and $B_n$ be the standard Borel
subgroup of upper triangular matrices in $G_n$. 
Let $T_n$ be the diagonal torus in $G_n$ and $Z_n$ be the center of $G_n$. 
We denote by $X^+(T_n)$ the dominant (with respect to $B_n$) algebraic
characters of $T_n$. For $\mu \in X^+(T_n)$ let $(\rho_{\mu}, M_{\mu})$ 
be the irreducible representation of $G_n({\mathbb R})$ with highest 
weight $\mu$. The Lie algebra of $G_n({\mathbb R})$ will be denoted by 
$\mathfrak{g}_n$. 
We let $K_n = {\rm O}_n({\mathbb R})Z_n({\mathbb R})$ and  
$K_n^{\circ}$
be the topological connected component of the identity element in $K_n$. 

Let ${\rm Coh}(G_n, \mu)$ be the set of all cuspidal automorphic
representations $\pi = \otimes'_{p \leq \infty} \pi_p$ of
${\rm GL}_n({\mathbb A}_{\mathbb Q})$ such that
$$
H^*(\mathfrak{g}_n, K_n^{\circ} \ ;\ \pi_{\infty}\otimes \rho_{\mu}) \neq (0).
$$
By $H^*(\mathfrak{g}_n, K_n^{\circ};-)$ we mean 
relative Lie algebra cohomology. 
We recall the following from \cite[\S I.5.1]{borel-wallach}: 
Given a $(\mathfrak{g}_n, K_n)$ module $\sigma$, one can talk about 
$H^*(\mathfrak{g}_n, K_n^{\circ}; \sigma)$ as well as 
$H^*(\mathfrak{g}_n, K_n; \sigma)$. Note that $K_n/K_n^{\circ} \simeq 
{\mathbb Z}/2{\mathbb Z}$ acts 
on $H^*(\mathfrak{g}_n, K_n^{\circ}; \sigma)$ and by taking invariants
under this action we get $H^*(\mathfrak{g}_n, K_n; \sigma)$.

Observe that a global representation being cohomological is entirely a 
function of the representation at infinity. There are two very basic 
problems, one local and the other global, which has given rise to an 
enormous amount of literature on this theme.
\begin{enumerate}
\item The local problem is to classify all irreducible 
admissible representations $\pi_{\infty}$ 
of $G_n({\mathbb R})$ which are cohomological, i.e., 
$H^*(\mathfrak{g}_n, K_n^{\circ}; \pi_{\infty}\otimes \rho_{\mu}) \neq 
(0)$, and for 
such representations to actually calculate the cohomology spaces. 

\item The global problem is to construct global cuspidal  
representations whose representation at infinity is cohomological in the 
above sense. 
\end{enumerate}

The reader is referred to Borel-Wallach \cite{borel-wallach} as a 
definitive reference for the local problem. For the purposes of this 
article we discuss the solution of the local problem for tempered 
representations of ${\rm GL}_n({\mathbb R})$. To begin, 
we record a very simplified version of 
\cite[Theorem II.5.3]{borel-wallach} and \cite[Theorem 
II.5.4]{borel-wallach}.

\begin{thm}
\label{thm:discrete-series}
Let $G$ be a reductive Lie group. Let $K$ be a maximal compact subgroup 
adjoined 
with the center of $G$. Discrete series representations of $G$ (if they 
exist) are cohomological and have nonvanishing 
cohomology only in degree ${\rm dim}(G/K)/2$.
\end{thm}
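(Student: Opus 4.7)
The plan is to reduce $H^*(\mathfrak{g}_n, K_n^\circ;\; \pi \otimes \rho_\mu)$ for a discrete series $\pi$ to a purely $K$-theoretic matching problem, and then to invoke Harish-Chandra's classification of discrete series together with Schmid's $L^2$-realization (equivalently Blattner's multiplicity formula) to pin down the degree. Existence of discrete series requires ${\rm rank}(G) = {\rm rank}(K)$, so fix a compact Cartan $T \subset K$, write the complexified Cartan decomposition $\mathfrak{g}_{\mathbb{C}} = \mathfrak{k}_{\mathbb{C}} \oplus \mathfrak{p}_{\mathbb{C}}$, and split $\rho = \rho_c + \rho_n$ into its compact and noncompact halves. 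A discrete series $\pi_\Lambda$ is parametrized by a regular Harish-Chandra parameter $\Lambda$; its infinitesimal character equals that of $\rho_\mu^{\vee}$ precisely when $\Lambda$ is Weyl-conjugate to $\mu + \rho$, and I focus on this matched case.

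First I would compute the relative Lie algebra cohomology via the standard complex
$$C^\bullet \;=\; {\rm Hom}_{K^\circ}\bigl(\wedge^\bullet \mathfrak{p},\; \pi_\Lambda \otimes \rho_\mu\bigr).$$
Since $\pi_\Lambda$ is unitary irreducible and $\rho_\mu$ is irreducible, the Casimir of $\mathfrak{g}$ acts by a scalar on each tensor factor. Wigner's lemma then forces the Koszul differential to vanish on $C^\bullet$ when the infinitesimal characters match, while $H^*(C^\bullet) = 0$ otherwise. Consequently
$$H^q(\mathfrak{g}, K^\circ;\; \pi_\Lambda \otimes \rho_\mu) \;\simeq\; {\rm Hom}_{K^\circ}\!\bigl(\wedge^q \mathfrak{p} \otimes \rho_\mu^{\vee},\; \pi_\Lambda\bigr),$$
and the problem becomes a pure $K$-type matching question between exterior powers of $\mathfrak{p}$ twisted by $\rho_\mu^{\vee}$ and the discrete series $\pi_\Lambda$.

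Next I would do the $K$-type bookkeeping. Using $T$-weights on $\mathfrak{p}_{\mathbb{C}}$, fix a splitting $\mathfrak{p}_{\mathbb{C}} = \mathfrak{p}^+ \oplus \mathfrak{p}^-$ into the positive and negative noncompact root spaces, whence
$$\wedge^q \mathfrak{p}_{\mathbb{C}} \;=\; \bigoplus_{a + b = q} \wedge^a \mathfrak{p}^+ \otimes \wedge^b \mathfrak{p}^-.$$
By Harish-Chandra and Schmid the lowest $K$-type of $\pi_\Lambda$ has highest weight $\Lambda + \rho_n - \rho_c$ and occurs with multiplicity one, and every other $K$-type has strictly larger infinitesimal norm. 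Pairing this against the extremal weights of $\wedge^a \mathfrak{p}^+ \otimes \wedge^b \mathfrak{p}^- \otimes \rho_\mu^{\vee}$ and using regularity of $\Lambda$ to rule out higher $K$-types, one finds that a nontrivial matching forces $a = {\rm dim}\,\mathfrak{p}^+$ and $b = {\rm dim}\,\mathfrak{p}^-$, so $q = {\rm dim}\,\mathfrak{p}/2 = {\rm dim}(G/K)/2$, and the resulting Hom space is one-dimensional there.

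The main obstacle is exactly this combinatorial $K$-type analysis. Blattner's formula gives the multiplicity of a $K$-type in $\pi_\Lambda$ as an alternating sum over the compact Weyl group, and ruling out cancellations that could bring some highest weight of $\wedge^q \mathfrak{p} \otimes \rho_\mu^{\vee}$ for $q \neq {\rm dim}\,\mathfrak{p}/2$ into coincidence with a $K$-type of $\pi_\Lambda$ requires careful control of the interaction between the noncompact positive roots and the orbit of $\Lambda + \rho_n - \rho_c$ under the compact Weyl group. Regularity of $\Lambda$ is the decisive ingredient: it keeps non-minimal $K$-types of $\pi_\Lambda$ too far out in norm to be reached by weights of exterior powers in off-middle degrees. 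This is the content of Theorems II.5.3 and II.5.4 of Borel--Wallach, whose proof the plan above is paraphrasing.
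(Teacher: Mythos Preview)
The paper does not prove this theorem; it simply records it as a simplified statement of Theorems II.5.3 and II.5.4 of Borel--Wallach, exactly the reference you invoke at the end of your proposal. So at the level of ``approach'' you and the paper agree: cite Borel--Wallach.

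Your sketch of the underlying argument, however, contains a concrete error at the $K$-type matching step. You assert that the nontrivial match forces $a = \dim\mathfrak p^+$ and $b = \dim\mathfrak p^-$; but then $q = a+b = \dim_{\mathbb C}\mathfrak p_{\mathbb C} = \dim(G/K)$, which contradicts your own next clause ``so $q = \dim\mathfrak p/2$''. Already for ${\rm SL}_2({\mathbb R})$ the holomorphic discrete series matches at $(a,b)=(1,0)$ and the anti-holomorphic one at $(0,1)$, each giving $q=1$, not $(a,b)=(1,1)$. In general the discrete series is an $A_{\mathfrak q}(\lambda)$ module for a $\theta$-stable Borel $\mathfrak q = \mathfrak t \oplus \mathfrak u$, and the cohomology sits in degree $\dim(\mathfrak u \cap \mathfrak p_{\mathbb C}) = \dim\mathfrak p^+ = \dim(G/K)/2$; the relevant extremal weight comes from $\wedge^{\dim\mathfrak p^+}\mathfrak p^+$ (or a compact-Weyl conjugate thereof), not from the full top exterior power of $\mathfrak p_{\mathbb C}$.
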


We have suppressed any mention of the finite dimensional coefficients 
because Wigner's Lemma \cite[Theorem I.4.1]{borel-wallach} gives a 
necessary condition for the infinitesimal character, and nonvanishing 
cohomology of a representation pins down the finite dimensional 
representation. Here is a well known example illustrating this theorem.

\begin{exam}{\rm
\label{exam:gl2-cohomological}
Let $G = G_2({\mathbb R})$ and $K = K_2 = {\rm O}_2({\mathbb 
R})Z_2({\mathbb R})$. 
For any integer $l \geq 1$, we let
$M_l$ denote the irreducible representation of $G$ of dimension 
$l$ which is the $(l-1)$-th symmetric power of the standard two 
dimensional representation. 
Let $D_l$ be the discrete series representation of lowest weight $l+1$.  
(If we take a weight $k$ holomorphic cusp form then the representation 
at infinity is $D_{k-1}$.) The Langlands parameter of 
$D_l$ is ${\rm Ind}_{{\mathbb C}^*}^{W_{\mathbb R}}(\chi_l)$, where
$W_{\mathbb R}$ is the Weil group of ${\mathbb R}$, and 
$\chi_l$ is the character of ${\mathbb C}^*$ sending $z$ to 
$(z/|z|)^l$. The representation $D_l$ is cohomological; more precisely, we 
have
$$
H^q(\mathfrak{g},K; (D_l \otimes |\cdot |_{\mathbb R}^{-(l-1)/2})
 \otimes M_l) = 
\left\{\begin{array}{ll}
{\mathbb C} & \mbox{if $q = 1$,} \\
0 & \mbox{if $q \neq 1$.}
\end{array}\right.
$$
See \cite[Proposition I.4 (1)]{waldspurger} for instance. To compare our
notation to the notation therein, take $h = l+1$, $a=l-1$, $\epsilon 
=0$ and put $d = (h,a,\epsilon)$. Then our $M_l$ is the $r[d]$ of 
\cite{waldspurger} and our $D_l \otimes |\cdot |_{\mathbb R}^{-(l-1)/2}$
is the $\pi[d]$ of \cite{waldspurger}. See 
\cite[\S2.1]{labesse-schwermer} for an ${\rm SL}_2$ version of this 
example. 
}
\end{exam}

It is a standard fact that relative Lie algebra cohomology satisfies a  
K\"unneth rule \cite[\S I.1.3]{borel-wallach}. Using this one can 
see that if $G$ is a product of $m$ copies of ${\rm GL}_2({\mathbb R})$
then the representation $D_{l_1} \otimes \cdots \otimes D_{l_m}$ 
is, up to twisting by a suitable power of $|\cdot |_{\mathbb R}$,  
cohomological with respect to the finite dimensional coefficients 
$M_{l_1} \otimes \cdots \otimes M_{l_m}$.

We now recall, very roughly, a version of Shapiro's lemma 
for relative Lie algebra cohomology. 
Consider a parabolically induced representation. The cohomology of the
induced representation can be described in terms of the cohomology
of the inducing representation.
(See \cite[Theorem III.3.3, (ii)]{borel-wallach} for a precise 
formulation.)

We can now give a reasonably complete picture for tempered 
representations of ${\rm GL}_n({\mathbb R})$ which are cohomological. 
See Clozel \cite[Lemme 3.14]{clozel2}.
We follow the presentation in \cite[\S3.1]{mahnkopf2}.

Let $\mathcal{L}^+_0(G_n)$ stand for the set of all pairs
$(w,{\bf l}),$ with ${\bf l} = (l_1,\dots ,l_n) \in {\mathbb Z}^n$ such 
that 
$l_1 > \cdots > l_{[n/2]} > 0$ and $l_i = -l_{n-i+1}$, 
and $w \in {\mathbb Z},$ such that 
$$
w + {\bf l} \equiv 
\left\{\begin{array}{ll}
1 & \mbox{if $n$ is even,} \\
0 & \mbox{if $n$ is odd.}
\end{array}\right.
$$
This set $\mathcal{L}^+_0(G_n)$ will parametrize certain tempered 
representations defined as follows. For $(w,{\bf l}) \in 
\mathcal{L}^+_0(G_n)$, define the parabolically 
induced representation $J(w, {\bf l})$ by 
$$
J(w,{\bf l}) = {\rm Ind}_{P_{2,\dots,2}}^{G_n}
((D_{l_1}\otimes|\cdot|_{\mathbb R}^{w/2}) \otimes \cdots \otimes
(D_{l_{n/2}}\otimes|\cdot|_{\mathbb R}^{w/2}))
$$
if $n$ is even, and 
$$
J(w,{\bf l}) = {\rm Ind}_{P_{2,\dots,2,1}}^{G_n}
((D_{l_1}\otimes|\cdot|_{\mathbb R}^{w/2}) \otimes \cdots \otimes
(D_{l_{(n-1)/2}}\otimes|\cdot|_{\mathbb R}^{w/2}) \otimes
|\cdot|_{\mathbb R}^{w/2})
$$
if $n$ is odd. It is well known that, up to the twist 
$|\cdot|_{\mathbb R}^{w/2}$,
the representations $J(w,{\bf l})$ 
are irreducible tempered representations of $G_n$
\cite[\S 2]{knapp}. 

Now we describe the finite dimensional coefficients. Let $X_0^+(T_n)$ 
stand for all dominant integral weights $\mu = (\mu_1,\dots,\mu_n)$ 
satisfying the
purity condition that there is an integer $w$, called the weight of $\mu$,
such that $\mu_i + \mu_{n-i+1} = w$. The sets $\mathcal{L}^+_0(G_n)$
and $X_0^+(T_n)$ are in bijection via the map 
$(w,{\bf l}) \mapsto \mu = w/2 + {\bf l}/2 - \rho_n$ where 
$\rho_n$ is half the sum of positive roots for ${\rm GL}_n$. 
Let $w_n$ be the Weyl group element
of $G_n$ of longest length and let $\mu^{\vee} = -w_n\cdot \mu$. Then
$\rho_{\mu^{\vee}} \simeq (\rho_{\mu})^{\vee}$ is the contragredient of
$\rho_{\mu}$.

Assume that the pair $(w,{\bf l})$ corresponds to $\mu$ as above. 
Using Example~\ref{exam:gl2-cohomological}
on the cohomology of discrete series representations, and 
appealing to the K\"unneth rule and Shapiro's lemma as recalled above, one 
can conclude that
$$
H^q({\mathfrak g}_n, K_n^{\circ}; (J(w,{\bf l})\otimes {\rm sgn}^t) \otimes
M_{\mu^{\vee}}) = (0)
$$
unless the degree $q$ is in the so-called cuspidal range 
$b_n \leq q \leq t_n$, where the bottom degree $b_n$ is given by 
$$
b_n = 
\left\{\begin{array}{ll}
n^2/4 & \mbox{if $n$ is even,} \\
(n^2-1)/4 & \mbox{if $n$ is odd,}
\end{array}\right.
$$
and the top degree $t_n$ is given by
$$
t_n = 
\left\{\begin{array}{ll}
((n+1)^2-1)/4-1 & \mbox{if $n$ is even,} \\
(n+1)^2/4 -1 & \mbox{if $n$ is odd,}
\end{array}\right.
$$
and finally that the dimension of 
$H^q({\mathfrak g}_n,K_n ; (J(w,{\bf l})\otimes {\rm sgn}^t) 
\otimes M_{\mu^{\vee}})$ is $1$ if 
$q = b_n$ or $q = t_n$. The exponent $t$ of the sign character 
${\rm sgn}$ is in $\{0,1\}$. If $n$ is even, $t$ plays no role since
$J(w,{\bf l})\otimes {\rm sgn} = J(w,{\bf l})$. If $n$ is odd, $t$ is 
determined by the weight of $\mu$ and the parity of $(n-1)/2$, due to
considerations of central character (Wigner's lemma).

To complete the picture one notes that, given $M_{\mu}$,  
there is, up to twisting by the sign character, 
only one irreducible, unitary (up to twisting by  
$|\cdot |_{\mathbb R}^{-w/2}$), generic representation 
with nonvanishing cohomology with respect to $M_{\mu}$ and this 
representation is a suitable $J(w,{\bf l})$. 
(See \cite[\S 3.1.3]{mahnkopf2}.) 

\begin{rem}
{\rm 
\label{rem:j(w,l)}
Let $\pi$ be a cohomological cuspidal algebraic 
(\cite[\S1.2.3]{clozel2}) automorphic 
representation of $G_n({\mathbb A}_{\mathbb Q})$ 
then the representation $\pi_{\infty}$ at infinity has to be a 
$J(w,{\bf l})$ for some $(w,{\bf l}) \in \mathcal{L}_0^+(G_n)$. 
This can be seen as follows. Since $\pi$ is cuspidal and algebraic, by the 
purity lemma \cite[Lemme 4.9]{clozel2}, we get that the 
parameter of $\pi_{\infty}$ is pure. Since it is cohomological the 
finite dimensional coefficients has a highest weight $\mu$ which is 
also pure, i.e., $\mu \in X_0^+(T_n)$. 
Further, $\pi_{\infty}$ being generic and 
essentially unitary implies that it is a $J(w,{\bf l})$ as above. 
}
\end{rem}

\begin{exam}{\rm 
\label{exam:gl2}
To illuminate this picture we work through the above recipe for the case 
of a holomorphic cusp form. (We use the notation introduced in 
Example~\ref{exam:gl2-cohomological} and the previous sections.)
Let $\varphi \in S_k(N,\omega)$ and consider the 
cuspidal automorphic representation $\pi = \pi(\varphi)\otimes |\cdot|^s$. 
Then $\pi_{\infty} = \pi(\varphi)_{\infty} \otimes |\cdot|_{\mathbb R}^s
= D_{k-1} \otimes |\cdot|_{\mathbb R}^s.$ 
\begin{enumerate}
\item If $k$ is even, then the representation $\pi_{\infty}$ is a 
$J(w, {\bf l})$ exactly when $w = 2s \in {\mathbb Z}$ and $w + k-1$ is 
odd. Hence $s \in {\mathbb Z}$ and $\pi_{\infty} = J(2s, (k-1, -(k-1)))$.
The corresponding dominant weight $\mu$ is $(s + (k-2)/2, s - (k-2)/2)$. 
The representation $M_{\mu^{\vee}}$ is $M_{k-1} \otimes ({\rm 
det})^{-s-(k-2)/2}$.
(For a dominant weight $\mu = (\mu_1,\mu_2) \in {\mathbb Z}^2$
the rational representation $M_{\mu^{\vee}}$ is 
$M_{\mu_1-\mu_2+1} \otimes ({\rm det})^{-\mu_1}$.) Using the fact that 
${\rm det} = {\rm sgn} \otimes |\cdot |_{\mathbb R}$ and that 
$D_{k-1} \otimes {\rm sgn} = D_{k-1}$, we get 
\begin{eqnarray*}
\pi_{\infty} \otimes M_{\mu^{\rm \vee}} & = &
(D_{k-1} \otimes |\cdot|_{\mathbb R}^s) \otimes 
(M_{k-1} \otimes ({\rm det})^{-s-(k-2)/2}) \\
& = & 
(D_{k-1} \otimes |\cdot|_{\mathbb R}^{-(k-2)/2}) \otimes M_{k-1},
\end{eqnarray*}
which has nontrivial $(\mathfrak{g},K)$-cohomology (see
Example~\ref{exam:gl2-cohomological}).

\item If $k \geq 3$ is odd, then 
$\pi_{\infty}$ is a
$J(w, {\bf l})$ exactly when $w = 2s$ is an 
odd integer. Letting $s = 1/2 + r$, with $r \in  {\mathbb Z}$, we have 
$\pi_{\infty} = J(2r+1, (k-1, -(k-1)))$.
The corresponding dominant weight $\mu$ is $(r + (k-1)/2, r+1-(k-1)/2)$.
The representation $M_{\mu^{\vee}}$ is 
$M_{k-1} \otimes ({\rm det})^{-r-(k-1)/2}$.
In this case we get 
\begin{eqnarray*}
\pi_{\infty} \otimes M_{\mu^{\rm \vee}} & = &
(D_{k-1} \otimes |\cdot|_{\mathbb R}^{1/2+r}) \otimes
(M_{k-1} \otimes ({\rm det})^{-r-(k-1)/2}) \\
& = &
(D_{k-1} \otimes |\cdot|_{\mathbb R}^{-(k-2)/2}) \otimes M_{k-1},
\end{eqnarray*}
which has nontrivial $(\mathfrak{g},K)$-cohomology as mentioned before.
We have excluded the case $k=1$, because, firstly, the representation at 
infinity is not cohomological, and secondly, any symmetric power 
$L$-function of a weight one form has no critical points. 
\end{enumerate}
We finally remark that in both cases, the condition $\pi_{\infty}$ 
being a $J(w,{\bf l})$ is exactly the condition which ensures that 
the representation $\pi = \pi(\varphi)\otimes |\cdot |^s$ is 
regular algebraic in the sense of 
Clozel \cite[\S1.2.3 and \S3.4]{clozel2}.
}
\end{exam}

\subsection{Functoriality and cohomological representations}

Now we turn to the global problem, 
namely, to construct a cuspidal automorphic representation whose 
representation at infinity is cohomological. The specific theorem 
we are interested in is the following.

\begin{thm}
\label{thm:symmetric-cohomology}
Let $\varphi \in S_k(N,\omega)$ with $k \geq 2$. 
Let $n \geq 1$. Assume that
${\rm Sym}^n(\pi(\varphi))$ is a cuspidal representation of
${\rm GL}_{n+1}({\mathbb A}_{\mathbb Q})$. Let 
$$
\Pi = {\rm Sym}^n(\pi(\varphi)) \otimes \xi \otimes |\cdot |^s
$$
where $\xi$ is any id\`ele class character such that 
$\xi_{\infty} = {\rm sgn}^{\epsilon}$, with $\epsilon \in \{0,1\}$, 
and $|\cdot |$ is the ad\`elic norm. We suppose that $s$ and $\epsilon$ 
satisfy: 
\begin{enumerate}
\item If $n$ is even, then let $s \in {\mathbb Z}$ and 
$\epsilon \equiv n(k-1)/2 \pmod{2}$. 
\item If $n$ is odd then, we let $s \in {\mathbb Z}$ if k is even,
and we let $s \in 1/2+{\mathbb Z}$ if k is odd. We impose no condition 
on $\epsilon$. 
\end{enumerate}
Then 
$\Pi \in {\rm Coh}(G_{n+1}, \mu^{\vee})$ 
where $\mu \in X^+_0(T_{n+1})$ is given by 
$$
\mu = \left(
\frac{n(k-2)}{2}+s, \frac{(n-2)(k-2)}{2}+s,\dots, \frac{-n(k-2)}{2}+s
\right) = (k-2)\rho_{n+1}+s.
$$
(Recall that $\rho_{n+1}$ is half the sum of positive roots of ${\rm 
GL}_{n+1}$.)
In other words, the representation 
${\rm Sym}^n(\pi(\varphi)) \otimes \xi \otimes |\cdot |^s$, with $\xi$ and 
$s$ as above, contributes to 
the cohomology of the locally symmetric space 
${\rm GL}_{n+1}({\mathbb Q})\backslash {\rm GL}_{n+1}({\mathbb 
A}_{\mathbb Q})/
K_f K_{n+1, \infty}^{\circ}$ with coefficients in the local 
system determined by 
$\rho_{\mu^{\vee}}$, where $K_f$ is a deep enough open compact subgroup of 
${\rm GL}_{n+1}({\mathbb A}_{{\mathbb Q},f})$. (Here ${\mathbb 
A}_{{\mathbb Q},f}$ denotes the finite ad\`eles of ${\mathbb Q}$.) 
\end{thm}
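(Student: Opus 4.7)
The plan is to reduce the theorem to the explicit classification of cohomological (essentially) tempered representations of $GL_{n+1}(\mathbb{R})$ reviewed in \S 5.1, namely the $J(w,{\bf l})\otimes{\rm sgn}^t$. The argument is essentially local at infinity: once $\Pi_\infty$ is identified with the canonical $J(w,{\bf l})\otimes{\rm sgn}^t$ attached to the $\mu$ stated in the theorem, cohomology with respect to $\rho_{\mu^\vee}$ is immediate from \cite{borel-wallach}, and cuspidality of ${\rm Sym}^n(\pi(\varphi))$ (assumed) is preserved under the idèle class character twist, so $\Pi$ indeed lies in ${\rm Coh}(G_{n+1},\mu^\vee)$.

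The first and main step is to compute the archimedean Langlands parameter of $\Pi$. Since $\pi(\varphi)_\infty = D_{k-1}$ has Langlands parameter ${\rm Ind}_{\mathbb{C}^\times}^{W_\mathbb{R}}(\chi_{k-1})$ (Example~\ref{exam:gl2-cohomological}), the $n$-th symmetric power restricted to $\mathbb{C}^\times$ is $\bigoplus_{i=0}^n \chi_{(n-2i)(k-1)}$. For $m \neq 0$ the pairs $\{\chi_m,\chi_{-m}\}$ reassemble into ${\rm Ind}_{\mathbb{C}^\times}^{W_\mathbb{R}}(\chi_m)$, which under local Langlands corresponds to $D_m$; when $n=2r$ is even, the leftover $\chi_0$ extends to a character of $W_\mathbb{R}$ equal to ${\rm sgn}^{r(k-1)}$, as one checks by computing the action of $j \in W_\mathbb{R}$ on the symmetric-power basis vector $e_1^r e_2^r$. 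Transferring this back to $GL_{n+1}(\mathbb{R})$ via local Langlands and twisting by $\xi_\infty = {\rm sgn}^\epsilon$ and $|\cdot|_\mathbb{R}^s$ displays $\Pi_\infty$ as a full parabolic induction of type $(2,\ldots,2)$ for $n$ odd, or $(2,\ldots,2,1)$ for $n$ even, with inducing data $\bigotimes_i(D_{l_i}\otimes|\cdot|_\mathbb{R}^s)$ together with the extra $GL_1$ character ${\rm sgn}^{\epsilon+n(k-1)/2}\otimes|\cdot|_\mathbb{R}^s$ in the even case (using $D_l\otimes{\rm sgn}=D_l$).

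With $w=2s$ and ${\bf l} = (n(k-1),(n-2)(k-1),\ldots,-n(k-1))$, this is exactly $J(w,{\bf l})\otimes{\rm sgn}^t$ for an appropriate $t$, and the hypotheses on $s,\epsilon$ are what is needed for membership and matching. The parity condition defining $\mathcal{L}_0^+(G_{n+1})$ forces $s\in\mathbb{Z}$, except when both $n$ and $k$ are odd (then $s\in 1/2+\mathbb{Z}$). For $n$ even the hypothesis $\epsilon\equiv n(k-1)/2\pmod{2}$ is precisely what kills the middle sign character, giving $t=0$ and thereby the central character of $\rho_{\mu^\vee}$ dictated by Wigner's lemma; for $n$ odd the sign twist acts trivially on each discrete series factor so no condition on $\epsilon$ is needed. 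A short calculation via the bijection $\mu = w/2+{\bf l}/2-\rho_{n+1}$ then yields $\mu_i = s + (n-2(i-1))(k-2)/2$, i.e.\ $\mu = (k-2)\rho_{n+1}+s(1,\ldots,1)$, which is integral exactly under the $s$-hypotheses. Non-vanishing cohomology in the cuspidal range follows from the Künneth rule and Shapiro's lemma applied to Example~\ref{exam:gl2-cohomological}, as summarized in \cite[\S 3.1.3]{mahnkopf2} (going back to \cite{labesse-schwermer}).

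The main obstacle is the careful bookkeeping in the first two steps -- in particular, correctly identifying the sign character arising from extending $\chi_0$ to $W_\mathbb{R}$ in the even $n$ case, and confirming that the seemingly ad hoc condition $\epsilon\equiv n(k-1)/2\pmod{2}$ is exactly what is required for $\Pi_\infty$ to be cohomological with respect to the precise $\mu$ in the theorem (rather than some sign twist thereof). Everything beyond this matching is a direct appeal to the cited classification.
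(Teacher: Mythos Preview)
Your proposal is correct and follows essentially the same approach as the paper's proof: compute the archimedean Langlands parameter of $\Pi$ as ${\rm Sym}^n$ of the induced parameter of $D_{k-1}$, decompose it into discrete series blocks (plus a sign character when $n$ is even), identify $\Pi_\infty$ with the appropriate $J(w,{\bf l})$, and read off $\mu$ via the bijection $\mu = w/2 + {\bf l}/2 - \rho_{n+1}$. Your write-up is in fact slightly more explicit than the paper's sketch (which treats only the even case in detail), in particular in explaining why the extension of $\chi_0$ to $W_{\mathbb R}$ is ${\rm sgn}^{r(k-1)}$ and in handling the odd-$n$ parity analysis.
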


\begin{proof}
See Labesse--Schwermer \cite[Proposition 5.4]{labesse-schwermer} for 
an ${\rm SL}_n$-version of this theorem. 
When $k=2$, the theorem has also been observed by Kazhdan, Mazur and 
Schmidt \cite[pp.99]{kazhdan-mazur-schmidt}.

We sketch the details in the case when $n = 2r$ is even (the case when 
$n$ is odd being absolutely similar.) The proof follows by observing that the 
representation at infinity of 
${\rm Sym}^n(\pi(\varphi))$ is the representation of 
${\rm GL}_{n+1}({\mathbb R})$ whose Langlands parameter is 
${\rm Sym}^n({\rm Ind}_{\mathbb C^*}^{W_{\mathbb R}}(\chi_{k-1}))$ 
where $\chi_{k-1}(z) = (z/|z|)^{k-1}$. It is a pleasant exercise to
calculate a symmetric power of a two dimensional induced representation,
after doing which one gets that the representation 
$\Pi_{\infty}$ is given by 
\begin{eqnarray*}
\Pi_{\infty} & = &
{\rm Ind}_{P_{2,\dots,2,1}}^{G_{n+1}}
(D_{2r(k-1)}\otimes \cdots \otimes D_{2(k-1)} \otimes 
{\rm sgn}^{r(k-1)}) \otimes \xi_{\infty} \otimes |\cdot |_{\mathbb R}^s \\
& = &
{\rm Ind}_{P_{2,\dots,2,1}}^{G_{n+1}}
(D_{2r(k-1)}\otimes \cdots \otimes D_{2(k-1)} \otimes
{\rm sgn}^{r(k-1)+\epsilon}) \otimes |\cdot |_{\mathbb R}^s.
\end{eqnarray*}

We deduce that $\Pi_{\infty}$ is a $J(w,{\bf l})$ (which, as mentioned 
before, is equivalent to $\Pi$ being regular algebraic) exactly when 
$w = 2s \in {\mathbb Z}$, $r(k-1)+\epsilon$ is even, 
$$
{\bf l} = (2r(k-1),\dots,2(k-1),0,-2(k-1),\dots,-2r(k-1)) 
= 2(k-1)\rho_{n+1},
$$ and 
$w + {\bf l}$ is even which implies that $w$ is even. These conditions 
are satisfied by the hypothesis in the theorem. The weight $\mu$ 
is determined by $\mu = w/2+{\bf l}/2-\rho_{n+1}$ and the first 
part of the theorem follows 
from the discussion in the previous section. 

Finally, the relation with the cohomology of locally symmetric 
spaces follows as in \cite[\S1]{labesse-schwermer} or 
\cite[\S3.2]{mahnkopf2}.
\end{proof}

\begin{cor}
Let $\varphi \in S_k(N,\omega)$. Assume that $k \geq 2$ and that $\varphi$ is not dihedral. 
Then, up to twisting by a quadratic character, ${\rm Sym}^n(\pi(\varphi))$
for $n=2,3,4$, contributes to cuspidal cohomology. 
\end{cor}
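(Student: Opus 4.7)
The plan is to deduce the corollary directly from Theorem~\ref{thm:symmetric-cohomology}. Two things need to be checked: first, that ${\rm Sym}^n(\pi(\varphi))$ is cuspidal for $n=2,3,4$, so that Theorem~\ref{thm:symmetric-cohomology} applies; and second, that one can exhibit an id\`ele class character $\xi$ of order at most two (together with an allowable shift $|\cdot|^s$) meeting the parity conditions in the theorem.

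For cuspidality I would invoke the three functorial lifts recalled in \S\ref{sec:functorial}: Gelbart--Jacquet yields cuspidality of ${\rm Sym}^2(\pi(\varphi))$ unless $\varphi$ is dihedral; Kim--Shahidi yields cuspidality of ${\rm Sym}^3(\pi(\varphi))$ unless $\varphi$ is dihedral or tetrahedral; and Kim yields cuspidality of ${\rm Sym}^4(\pi(\varphi))$ unless $\varphi$ is dihedral, tetrahedral, or octahedral. By Remark~\ref{rem:easy}(1), any tetrahedral, octahedral or icosahedral cusp form has weight $k=1$, so the hypothesis $k \geq 2$ eliminates these exceptional possibilities; combined with the non-dihedral assumption this gives cuspidality of ${\rm Sym}^n(\pi(\varphi))$ for $n=2,3,4$.

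The choice of twist is then a short combinatorial check against the conditions of Theorem~\ref{thm:symmetric-cohomology}. For $n=4$, one has $n(k-1)/2 = 2(k-1)$, which is automatically even, so $\epsilon = 0$ is allowed and $\xi$ may be taken trivial. For $n=2$, one needs $\epsilon \equiv k-1 \pmod{2}$, which is handled by choosing any quadratic Dirichlet character whose archimedean sign matches the required parity (trivial when $k$ is odd, and e.g.\ the quadratic character attached to $\mathbb{Q}(i)$ when $k$ is even). For $n=3$ the theorem imposes no constraint on $\epsilon$, so a trivial $\xi$ works, with $s$ chosen integral or half-integral according to the parity of $k$. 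In every case the twist needed is by a quadratic character, which is what the corollary asserts.

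The serious potential obstacle would have been non-cuspidality of the lift for some non-dihedral $\varphi$, which would force one to work with an isobaric sum and compute cohomology via a K\"unneth argument as in \S\ref{sec:cohomology}; Remark~\ref{rem:easy}(1) is precisely what removes this difficulty. The half-integral shift arising for $n=3$ with $k$ odd is simply the standard normalization making the lift regular algebraic in the sense of Clozel (compare Example~\ref{exam:gl2}), and is already built into the \emph{essentially cohomological} framework of Theorem~\ref{thm:symmetric-cohomology}.
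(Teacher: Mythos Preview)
Your argument is correct and follows the same route as the paper: apply Theorem~\ref{thm:symmetric-cohomology} once cuspidality of ${\rm Sym}^n(\pi(\varphi))$ for $n=2,3,4$ is secured. The paper compresses the cuspidality step into a single citation of \cite{kim-shahidi-duke}, whereas you unpack it by invoking the individual lifting theorems together with the observation (implicit in Remark~\ref{rem:easy}(1)) that tetrahedral, octahedral and icosahedral forms necessarily have weight~$1$; the content is the same, and your added discussion of the parity of $\epsilon$ and the half-integral shift for $n=3$, $k$ odd, is a welcome elaboration of what the paper leaves implicit.
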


\begin{proof}
Other than the above theorem one appeals to the main result of \cite{kim-shahidi-duke}.
\end{proof}

One might view this theorem as an example of the possible dictum that 
a functorial lift of a cohomological representation is cohomological. 
This has been used in many instances to construct global 
representations which contribute to cuspidal cohomology.  However, 
there are several instances where a functorial lift of a cohomological
representation is not cohomological. On the other hand, it is an 
interesting question to ask if the converse of the above dictum is true, 
namely, if a lift is cohomological, then whether the preimage, is {\it a fortiori}
cohomological? We end the section by a series of examples illustrating some of these
principles. But before doing so, we would like to draw the reader's attention to a conjecture
of Clozel \cite[\S1]{clozel1} which is motivated by the ideas of 
Labesse and Schwermer \cite{labesse-schwermer}. It roughly states that given a tempered 
cohomological representation at infinity, one can find a global cuspidal
automorphic representation whose representation at infinity is the given one.

\begin{exam}{\rm 
The following 
is a sampling of results--which by no means is to be considered 
exhaustive--where functoriality is used to produce cohomological 
representations. 
\begin{enumerate}

\item Labesse and Schwermer \cite{labesse-schwermer}
proved the existence of nontrivial cuspidal 
cohomology classes for ${\rm SL}_2$ and ${\rm SL}_3$ over any number field 
$E$ which contains a totally real number field $F$ such that $F = F_0 
\subset F_1 \subset \cdots \subset F_n = E$ with each each $F_{i+1}/F_i$ 
either a cyclic extension of prime degree or a non-normal cubic extension. 
The functorial lifts used were base change for ${\rm GL}_2$ and the 
symmetric square lifting of Gelbart and Jacquet. This was 
generalized for ${\rm SL}_n$ over $E$, in conjunction with Borel 
\cite{borel-labesse-schwermer}; with the additional input of base change 
for ${\rm GL}_n$. 

\item Motivated by \cite{labesse-schwermer}, Clozel \cite{clozel1} used 
automorphic induction and proved the existence of nontrivial cuspidal 
cohomology classes for ${\rm SL}_{2n}$ over any number field.

\item Rajan \cite{rajan}, also motivated by \cite{labesse-schwermer}, 
proved the existence of nontrivial cuspidal cohomology classes for 
${\rm SL}_1(D)$ for a quaternion division algebra $D$ over a number field 
$E$, with $E$ being
an extension of a totally real number field $F$ with solvable 
Galois closure. Other than base change, he used the Jacquet--Langlands 
correspondence.

\item Ash and Ginzburg \cite[\S 4]{ash-ginzburg} have commented on 
a couple of examples of cuspidal cohomology classes for 
${\rm GL}_4$ over ${\mathbb Q}$. The first is 
by lifting from ${\rm GSp}_4$ to ${\rm GL}_4$
a weight $3$ Seigel modular form. The second is to use automorphic 
induction from ${\rm GL}_2$ over a quadratic extension. 

\item Ramakrishnan and Wang \cite{ramakrishnan-wang} used the 
lifting from ${\rm GL}_2 \times {\rm GL}_3 \to {\rm GL}_6$, due to Kim and 
Shahidi, to construct cuspidal cohomology classes of ${\rm GL}_6$ over 
${\mathbb Q}$. 
\end{enumerate}
In almost all the above works, functoriality is used to construct 
cuspidal representations, and in doing so, one exercises some control over
the representations at infinity to arrange for them to be cohomological. 
}
\end{exam}

\begin{exam}
\label{exam:counterexample}{\rm 
We construct an example to show that a functorial lift of a cohomological
representation need not be cohomological. 
For an even integer $k$, 
take two weight $k$ holomorphic cusp forms $\varphi_1$
and $\varphi_2$, and let $\pi_i = \pi(\varphi_i)$ for $i =1,2$. 
By Example~\ref{exam:gl2} we have that both $\pi_1$ and $\pi_2$ are 
cohomological representations. 
Put $\Pi = \pi_1 \boxtimes \pi_2$ (see Ramakrishnan \cite{dinakar1}).
Choose the forms $\varphi_1$ and $\varphi_2$ such that $\Pi$ is cuspidal; 
this can be arranged by taking exactly one of them to be dihedral, or by 
arranging that $\pi_1$ is not $\pi_2 \otimes \chi$ for any character 
$\chi$, by virtue of \cite[Theorem 11.1]{dinakar2}. It is easy to see 
that $\Pi_{\infty}$ is given by
$$
\Pi_{\infty} = {\rm Ind}_{P_{2,1,1}}^{G_4({\mathbb R})}
(D_{2(k-1)} \otimes {\rm sgn} \otimes 1\!\!1),
$$
where $1\!\!1$ is the trivial representation of ${\mathbb R}^*$. Observe
that $\Pi_{\infty}$ is not a $J(w,{\bf l})$ and hence is not cohomological
by applying Remark~\ref{rem:j(w,l)}. (Note that $\Pi$, as it stands, is 
not algebraic, but we can replace $\Pi$ by 
$\pi_1 \stackrel{T}{\boxtimes} \pi_2$ (see \cite[Definition 
1.10]{clozel2})
and make it algebraic; this replaces 
$\Pi_{\infty}$ by $\Pi_{\infty}\otimes |\cdot |_{\mathbb R}^{1/2}$.)
However, note that if we took 
$\varphi_1$ and $\varphi_2$ to be in general position (unequal weights) 
then the lift $\Pi$ would be cohomological. 
Similarly, it is possible to
construct such an example for the lifting from 
${\rm GL}_2 \times {\rm GL}_3$ to ${\rm GL}_6$. 
}
\end{exam}

\begin{exam}
\label{exam:counterexample2}
{\rm We would like to mention that in the converse direction the 
${\rm GL}_2 \times {\rm GL}_2$ to ${\rm GL}_4$ lifting is well behaved. 
Now let $\varphi_i$ have weight $k_i \geq 1$, for $i=1,2$, and assume 
without loss of generality that $k_1 \geq k_2$. With $\Pi = \pi(\varphi_1) 
\boxtimes \pi(\varphi_2)$ we have 
$$
\Pi_{\infty} = {\rm Ind}_{P_{2,2}}^{G_4({\mathbb R})}
(D_{k_1+k_2-2} \otimes D_{k_1-k_2}).
$$
Suppose $\Pi_{\infty}$ is cohomological, i.e., is a $J(w,{\bf l})$, 
then we would have $k_1+k_2-2 > k_1-k_2 > 0$, which  
implies that $k_1> k_2 \geq 2$, and hence both $\pi(\varphi_1)$ and 
$\pi(\varphi_2)$ are cohomological. 
}\end{exam}

\begin{exam}
\label{exam:counterexample3}
{\rm It seems to be well known that given a cuspidal representation $\pi$ of 
${\rm GL}_4({\mathbb A}_{\mathbb Q})$, its exterior square lift $\wedge^2(\pi)$,
which by Kim \cite{kim-jams} is an automorphic representation of 
${\rm GL}_6({\mathbb A}_{\mathbb Q})$, is never 
cohomological. 
}
\end{exam}

\section{Special values of $L$-functions of ${\rm GL}_n$: The work of 
Mahnkopf}
\label{sec:mahnkopf}

\subsection{General remarks on functoriality and special values}
This section is a summary of some recent results due to Joachim Mahnkopf
\cite{mahnkopf1} \cite{mahnkopf2}. In this work he proves certain 
special values theorems for the standard $L$-functions of cohomological
cuspidal automorphic representations of ${\rm GL}_n$. 
In principle one can appeal to functoriality and 
this work of Mahnkopf to prove new 
special values theorems. For example, given a cusp form 
$\varphi \in S_k(N,\omega)$, let $\pi(\varphi)$ denote the 
cuspidal automorphic representation of 
${\rm GL}_2({\mathbb A}_{\mathbb Q})$.
Functoriality 
predicts the existence of an automorphic
representation ${\rm Sym}^n(\pi(\varphi))$ of 
${\rm GL}_{n+1}({\mathbb A}_{\mathbb Q})$. (See \S\ref{sec:functorial}.)
Then it is easy to check that
$$
L(s, {\rm Sym}^n(\pi(\varphi)) = 
L(s + n(k-1)/2, {\rm Sym}^n \varphi),
$$
where the left hand side is the standard $L$-function of 
${\rm Sym}^n(\pi(\varphi))$. Using Mahnkopf's work for the
function on the left, one can hope to prove a special values
theorem for the function on the right. This is fine in principle,
but there are several obstacles to overcome before it can be made to work.

\subsection{The main results of Mahnkopf \cite{mahnkopf2}} 
Let $\mu \in X_0^+(T_n)$ and 
let $\pi \in {\rm Coh}(G_n, \mu)$. We let $L(s,\pi) = \prod_{p \leq 
\infty} L(s, \pi_p)$ be the standard $L$-function attached to $\pi$. Any 
character $\chi_{\infty}$ of ${\mathbb R}^*$ is of the form $\chi_{\infty} 
= \epsilon_{\infty}|\cdot |^m$ for a complex number $m$. We say 
$\chi_{\infty}$ is critical for $\pi_{\infty}$ if
\begin{enumerate}
\item $m \in 1/2 + {\mathbb Z}$ if $n$ is even, and $m \in {\mathbb Z}$ if 
$n$ is odd; and
\item $L(\pi_{\infty} \otimes \chi_{\infty}, 0)$ and 
$L(\pi_{\infty}^{\vee} \otimes \chi_{\infty}^{-1}, 1)$ are regular values.
\end{enumerate}
We say 
$\chi : {\mathbb Q}^*\backslash {\mathbb A}_{\mathbb Q}^{\times} 
\to {\mathbb C}^*$ is critical for 
$\pi$ if $\chi_{\infty}$ is critical for 
$\pi_{\infty}$. Let ${\rm Crit}(\pi)$ stand for all such characters $\chi$ 
which are critical for $\pi$. Let ${\rm Crit}(\pi)^{\leq}$ stand for all 
$\chi \in {\rm Crit}(\pi)$ such that if $\chi_{\infty} = 
\epsilon_{\infty}|\cdot |^m$ then $m \leq (1-{\rm wt}(\mu))/2$.

Let $\pi \in {\rm Coh}(G_n, \mu)$ and let $\chi \in {\rm Crit}(\pi)$. Let
$\chi_{\infty} = \epsilon_{\infty}|\cdot |^m$.
Given $\mu = (\mu_1,\dots,\mu_n) \in X^+(T_n)$ choose a 
$\lambda = (\lambda_1,\dots,\lambda_{n-1}) \in X^+(T_{n-1})$ such that 
\begin{enumerate}
\item $\mu_1 \geq \lambda_1 \geq \mu_2 \geq \cdots \geq \lambda_{n-1} 
\geq \mu_n$; and
\item $\lambda_{n/2}=-m+1/2$ if $n$ is even, and $\lambda_{(n+1)/2} = -m$ 
if $n$ is odd. 
\end{enumerate}
Proposition 1.1 of \cite{mahnkopf1} says that such a $\lambda$ exists. Let 
$P$ be the standard parabolic subgroup of $G_{n-1}$ of type $(n-2,1)$ and let 
$W^P$ be a system of representatives for $W_{M_P}\backslash W_{G_{n-1}}$. 
Let $\widehat{w} \in W^P$ be given by
$$
\widehat{w} = 
\left(\begin{array}{cccccccc}
1 & 2 & \cdots & \left[\frac{n}{2}\right]-1 & \left[\frac{n}{2}\right] & 
\left[\frac{n}{2}\right]+1 & \cdots & n-1 \\
1 & 2 & \cdots & \left[\frac{n}{2}\right]-1 & n-1 & 
\left[\frac{n}{2}\right] & \cdots & n-2
\end{array}\right).
$$
Define the weight $\mu'= (\widehat{w}(\lambda + \rho_{n-1}) - 
\rho_{n-1})|_{T_{n-2}} \in X^+(T_{n-2}) $ where $T_{n-2}$ is embedded in 
$T_{n-1}$ as $t \mapsto {\rm diag}(t,1)$.

\begin{thm}[Theorem 5.4 in Mahnkopf \cite{mahnkopf2}]
\label{thm:mahnkopf-1}
Let $\mu \in X^+_0(T_n)$ be regular and let $\pi \in {\rm Coh}({\rm GL}_n, 
\mu^{\vee})$.
Let $\mu' \in X^+(T_{n-2})$ be as above and 
$\pi' \in {\rm Coh}({\rm GL}_{n-2}, \mu')$; if $n$ is odd then $\pi'$ has 
to satisfy
a parity condition. We have 
\begin{enumerate}
\item ${\rm Crit}(\pi)^{\leq} \subset {\rm Crit}(\pi')^{\leq}$.
\item Let $\chi \in {\rm Crit}(\pi)^{\leq}$, with 
$\chi_{\infty} = \epsilon_{\infty}|\cdot |^m$. There exists a collection of
complex numbers 
$\Omega(\pi,\pi',\epsilon_{\infty}) \in {\mathbb C}^*/{\mathbb 
Q}(\pi){\mathbb Q}(\pi')$
such that for any finite extension $E/{\mathbb Q}(\pi){\mathbb Q}(\pi')$ 
the tuple
$\{ \Omega(\pi,\pi',\epsilon_{\infty}) \}_{\sigma \in {\rm 
Hom}(E,{\mathbb C})} 
\in (E\otimes {\mathbb C})^*/ ({\mathbb Q}(\pi){\mathbb Q}(\pi'))^*$ is 
well defined. 
There exists a complex number $P_{\mu}(m)$,
depending only on $\mu$ and $m$, subject to Assumption~\ref{ass:mahnkopf} 
below,  
such that for all $\sigma \in {\rm Aut}({\mathbb C}/{\mathbb Q})$ 
and almost all $\chi$ as above, we have
$$
\left(
\frac{\mathfrak{g}(\chi) \mathfrak{G}(\eta)P_{\mu}(m)}
{\Omega(\pi,\pi',\epsilon_{\infty})}
\frac{L(\pi\otimes\chi\eta, 0)}
{L(\pi'^{\vee} \otimes \chi, 0)}\right)^{\sigma} = 
\frac{\mathfrak{g}(\chi^{\sigma})\mathfrak{G}(\eta^{\sigma})P_{\mu}(m)}
{\Omega(\pi^{\sigma},\pi'^{\sigma},\epsilon_{\infty})}
\frac{L(\pi^{\sigma}\otimes\chi^{\sigma}\eta^{\sigma}, 0)}
{L((\pi'^{\vee})^{\sigma} \otimes \chi^{\sigma}, 0)},
$$
where $\eta$ is a certain auxiliary character and 
$\mathfrak{G}(\eta)$ a certain product of Gauss sums associated to
$\eta$.
\end{enumerate}
\end{thm}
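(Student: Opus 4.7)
The plan is to follow the cohomological approach of Mahnkopf, in which the critical values of $L(s, \pi \otimes \chi\eta)$ are realized via a Rankin--Selberg integral on ${\rm GL}_n \times {\rm GL}_{n-1}$, where the ${\rm GL}_{n-1}$ factor is built out of $\pi'$ on ${\rm GL}_{n-2}$ through Eisenstein induction against an auxiliary character. The first step is to pin down the cohomology degrees: since $\pi \in {\rm Coh}(G_n, \mu^{\vee})$ with $\mu$ regular, the archimedean component $\pi_\infty$ is a $J(w, {\bf l})$ and contributes one-dimensionally to bottom-degree cuspidal cohomology $H^{b_n}(\mathfrak{g}_n, K_n^\circ; \pi_\infty \otimes M_{\mu^\vee})$, and similarly for $\pi'$. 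Fixing a rational structure on cuspidal cohomology (from the $\mathbb{Q}$-structure on the locally symmetric space and on $M_{\mu^\vee}$) and one on the Whittaker model of $\pi_f$ (via the action of ${\rm Aut}(\mathbb{C})$ on Whittaker newvectors), the period $\Omega(\pi, \pi', \epsilon_\infty)$ is defined as the ratio between these two rational structures, well-defined modulo $\mathbb{Q}(\pi)\mathbb{Q}(\pi')$.

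To prove (1), I would compare critical strips directly. A character $\chi_\infty = \epsilon_\infty |\cdot|^m$ is critical for $\pi_\infty$ exactly when $L(\pi_\infty \otimes \chi_\infty, s)$ and $L(\pi_\infty^\vee \otimes \chi_\infty^{-1}, s)$ are simultaneously holomorphic at $s=0$ and $s=1$. Because $\pi_\infty$ is parametrized by $(w, {\bf l})$ coming from $\mu$, and $\pi'_\infty$ by analogous data obtained from $\mu'$ through branching via $\lambda$ and conjugation by $\widehat{w}$, the discrete-series exponents that appear in the gamma factor of $\pi'_\infty$ form a subset of those of $\pi_\infty$ after the prescribed shift. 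A direct bookkeeping calculation with these exponents, using the interlacing inequalities between $\mu$ and $\lambda$, shows that whenever $\chi \in {\rm Crit}(\pi)^\leq$ the same character is critical for $\pi'$.

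The heart of (2) is to express the special value of the $L$-function as a cup product in cohomology. The strategy is: realize $\pi_f$ in bottom-degree cohomology via the Whittaker-to-cohomology comparison that defines $\Omega$; construct an Eisenstein cohomology class on ${\rm GL}_{n-1}$ by inducing from $\pi' \otimes \chi$ on the standard parabolic of type $(n-2, 1)$, this being the step in which the Langlands--Shahidi analysis of the intertwining operators enters and which is responsible for the Gauss-sum factors $\mathfrak{g}(\chi)\mathfrak{G}(\eta)$ via the normalization of local Whittaker functions; and pair the two classes by restriction from ${\rm GL}_n$ to ${\rm GL}_{n-1}$. Unfolding this pairing gives the Jacquet--Piatetski-Shapiro--Shalika Rankin--Selberg integral, which at the critical point reduces to the ratio $L(\pi \otimes \chi\eta, 0)/L(\pi'^\vee \otimes \chi, 0)$ up to the archimedean integral $P_\mu(m)$ and up to the period $\Omega(\pi, \pi', \epsilon_\infty)$. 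Galois equivariance under $\sigma \in {\rm Aut}(\mathbb{C}/\mathbb{Q})$ is then automatic: each ingredient, namely rational cohomology classes, the Hecke-equivariant cup product, Gauss sums, and newvector Whittaker functions, transforms compatibly.

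The main obstacle is Assumption~\ref{ass:mahnkopf} itself: the archimedean zeta integral $P_\mu(m)$ must be nonzero for the relevant critical exponent $m$, for otherwise the quotient in the statement is not meaningful. A secondary but nontrivial technical point is establishing rationality of the Eisenstein cohomology class together with rationality of the leading term of Shahidi's intertwining operator at the point of induction; this is what ultimately forces the appearance of the auxiliary character $\eta$ and of the product of Gauss sums $\mathfrak{G}(\eta)$, and is precisely the place where the Rankin--Selberg and Langlands--Shahidi methods must be used in tandem, as emphasized in \S\ref{sec:mahnkopf}.
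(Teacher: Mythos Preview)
Your proposal is correct and follows essentially the same approach that the paper sketches: this theorem is quoted from Mahnkopf \cite{mahnkopf2} without proof here, and the paper's one-paragraph summary points to exactly the ingredients you describe, namely the Rankin--Selberg zeta integral for the pair $\pi \times {\rm Ind}_P^{G_{n-1}}(\pi' \otimes \chi)$ with a carefully chosen cusp form and Eisenstein series, its cohomological interpretation yielding the period, and the combined use of the Rankin--Selberg and Langlands--Shahidi methods. Your outline is in fact more detailed than what the paper records, but the strategy is the same.
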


The above theorem is valid only under the following assumption. 

\begin{ass}
\label{ass:mahnkopf}
$
P_{\mu}(m) \neq 0. 
$
\end{ass}

The quantity $P_{\mu}(m)$ is the value at $s=1/2$ of an archimedean 
Rankin--Selberg integral attached to certain cohomological choice of 
Whittaker functions. Mahnkopf proves a necessary condition for this 
nonvanishing assumption \cite[\S6]{mahnkopf2}. 
At present this seems to be a serious limitation of this technique. 
It is widely believed that 
this assumption is valid and it has shown up in 
several other works based on the same, or at any rate similar, techniques. 
See for instance Ash--Ginzburg \cite{ash-ginzburg}, 
Kazhdan-Mazur-Schmidt 
\cite{kazhdan-mazur-schmidt} and Harris \cite{harris}.
{\it It is an important technical problem to be able to prove this nonvanishing 
hypothesis.} 

The proof of the above theorem combines both the Langlands--Shahidi 
and the Rankin--Selberg methods of studying $L$-functions. One considers
the pair of representations
$
\pi \times {\rm Ind}_P^{G_{n-1}}(\pi' \otimes \chi)
$
of $G_n({\mathbb A}_{\mathbb Q}) \times G_{n-1}({\mathbb A}_{\mathbb Q})$ 
and carefully 
chooses a cusp form $\phi \in \pi$ and an 
Eisenstein series $\mathcal{E}$ corresponding to a section in ${\rm 
Ind}_P^{G_{n-1}}(\pi' \otimes \chi)$. To this pair $(\phi,\mathcal{E})$ a 
certain Rankin--Selberg type zeta integral \cite[2.1.2]{mahnkopf2},
which has a cohomological interpretation, computes the quotient of 
$L$-functions appearing in the theorem.

The theorem roughly 
says that the special values of a standard $L$-function for ${\rm GL}_n$
are determined in terms of those of a standard $L$-function for ${\rm 
GL}_{n-2}$. 
This descent process terminates since we know the special values of 
$L$-functions for 
${\rm GL}_1$ and ${\rm GL}_2$, and we get the following 
theorem; see \cite[\S 5.5]{mahnkopf2} for making the right choices 
in the induction on $n$.

\begin{thm}[Theorem A in Mahnkopf \cite{mahnkopf2}]
\label{thm:mahnkopf-2}
Assume that $\mu \in X^+_0(T_n)$ is regular and let $\pi \in {\rm 
Coh}(G_n,\mu^{\vee})$. Let $\chi \in {\rm Crit}(\pi)^{\leq}$. To 
$\pi$ and $\chi_{\infty}$ is attached  
$\Omega(\pi,\chi_{\infty}) \in {\mathbb C}$ such that for all 
but finitely many such $\chi$ we have 
$$
\left(
\frac{\mathfrak{g}(\chi)^{[n/2]}\mathfrak{G}(\eta)}
{\Omega(\pi, \chi_{\infty})}
L(\pi\otimes\chi\eta, 0)
\right)^{\sigma} 
=
\frac{\mathfrak{g}(\chi^{\sigma})^{[n/2]}
\mathfrak{G}(\eta^{\sigma})}
{\Omega(\pi^{\sigma},\chi^{\sigma}_{\infty})}
L(\pi^{\sigma}\otimes\chi^{\sigma}\eta^{\sigma}, 0),
$$
where $\eta$ is a certain auxiliary character and
$\mathfrak{G}(\eta)$ a certain product of Gauss sums associated to
$\eta$. Moreover, write $\chi_{\infty} = \epsilon_{\infty}'|\cdot 
|_{\infty}^l$ and set $\epsilon(\chi_{\infty}) = \epsilon_{\infty}'{\rm 
sgn}^l$. There are periods $\Omega_{\epsilon}(\pi) \in {\mathbb C}^*$ 
if $n$ is even, and $\Omega(\pi) \in {\mathbb C}^*$ if $n$ is odd, and a 
collection $P_{\mu}^l \in {\mathbb C}$, such that 
$\Omega(\pi, \chi_{\infty}) = P_{\mu}^l\Omega(\pi)$ if $n$ is odd, and 
$\Omega(\pi, \chi_{\infty}) = 
P_{\mu}^l\Omega_{\epsilon(\chi_{\infty})}(\pi)$ if $n$ is even. 

\end{thm}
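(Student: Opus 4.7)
The plan is to deduce Theorem~\ref{thm:mahnkopf-2} from Theorem~\ref{thm:mahnkopf-1} by a descending induction on $n$, terminating in the classical cases of $\mathrm{GL}_1$ and $\mathrm{GL}_2$. Rewriting the conclusion of Theorem~\ref{thm:mahnkopf-1} schematically as
\[
\frac{L(\pi\otimes\chi\eta,0)}{L(\pi'^{\vee}\otimes\chi,0)} \;\sim_{\rm Gal}\; \frac{\Omega(\pi,\pi',\epsilon_\infty)}{\mathfrak{g}(\chi)\,\mathfrak{G}(\eta)\,P_\mu(m)},
\]
the idea is that after twisting $\pi'^\vee$ into the rôle of the next $\pi$, every descent strips two from the rank while introducing exactly one factor $\mathfrak{g}(\chi)$, one factor $P_\mu(m)$ and one auxiliary Gauss-sum bundle.

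First I would check that Theorem~\ref{thm:mahnkopf-1} can genuinely be iterated. This means verifying three combinatorial points: the inclusion ${\rm Crit}(\pi)^{\leq}\subset{\rm Crit}(\pi')^{\leq}$ from part~(1) propagates so that the same $\chi$ (with the shifted half-integer/integer normalisation coming from the parity flip $n\mapsto n-2$) remains critical at every stage; the parity condition imposed on $\pi'$ when $n$ is odd is compatible with choosing an inhabitant of ${\rm Coh}(G_{n-2},\mu')$ via Theorem~\ref{thm:symmetric-cohomology} or Clozel's existence conjecture in the generality needed here; and the successor weight $\mu'=(\widehat{w}(\lambda+\rho_{n-1})-\rho_{n-1})|_{T_{n-2}}$ remains regular, so that the hypotheses of Theorem~\ref{thm:mahnkopf-1} are still satisfied. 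All three verifications are straightforward manipulations with the explicit formula for $\widehat{w}$ and the interlacing condition defining $\lambda$.

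Next I would run the induction. After $[n/2]-1$ applications of Theorem~\ref{thm:mahnkopf-1} the $L$-function on the right-hand side of the iterated relation is that of a cohomological cuspidal representation of $\mathrm{GL}_1$ (when $n$ is odd) or $\mathrm{GL}_2$ (when $n$ is even). In the first case the base step is supplied by the classical theorem of Weil on critical values of Hecke $L$-functions, and in the second case by Shimura's theorem (our equation~(\ref{eqn:deligne-sym1})) which expresses the critical values in terms of the periods $c^\pm$. Either way one obtains Galois equivariance modulo a Gauss sum and one intrinsic period. Defining $\Omega_\epsilon(\pi)$ (resp. $\Omega(\pi)$) as the product of all the intermediate periods $\Omega(\pi_i,\pi_{i+1},\epsilon_\infty)$ appearing along the chain $\pi=\pi_0,\pi_1,\dots$, times the base-case period, and collecting the product $\prod_i P_{\mu}(m_i)$ into a single factor $P_\mu^l$, converts the telescoped Galois-equivariance relation into the statement of the theorem, with the exponent $[n/2]$ on $\mathfrak{g}(\chi)$ arising as one factor per descent.

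The hard part will be showing that the accumulated constant $\Omega(\pi,\chi_\infty)$ factors cleanly as $P_\mu^l\,\Omega_{\epsilon(\chi_\infty)}(\pi)$ in the even case and $P_\mu^l\,\Omega(\pi)$ in the odd case, i.e. that the dependence on the infinity-type of $\chi$ separates from the intrinsic dependence on $\pi$. The delicate point is that $\chi_\infty$ enters the descent both through the choice of $\lambda$ (governing the successor weights $\mu'$) and through the archimedean zeta values $P_\mu(m)$. To disentangle these I would argue that the combinatorial descent depends only on $\mu$ and on the integer $l$ appearing in $\chi_\infty=\epsilon'_\infty|\cdot|^l$, so that the intermediate periods $\Omega(\pi_i,\pi_{i+1},\epsilon_\infty)$ can be normalised to depend only on the sign character $\epsilon(\chi_\infty)=\epsilon'_\infty\cdot\mathrm{sgn}^l$ and on $\pi$ itself, while all the $l$-dependence is siphoned into the product of the archimedean Rankin--Selberg values, which is by construction a function of $\mu$ and $l$ alone. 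This separation, together with Assumption~\ref{ass:mahnkopf} applied at each stage, yields both assertions of the theorem.
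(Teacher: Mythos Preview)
Your overall strategy---iterate Theorem~\ref{thm:mahnkopf-1} to descend from $G_n$ to $G_{n-2}$ repeatedly and terminate at $\mathrm{GL}_1$ or $\mathrm{GL}_2$---is exactly what the paper sketches; the paper is purely expository here and simply refers the reader to \cite[\S5.5]{mahnkopf2} for the bookkeeping of the inductive choices.

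There is, however, a genuine gap in your justification of the inductive step. You need, at each stage, an auxiliary $\pi'\in{\rm Coh}(G_{n-2},\mu')$ (satisfying the parity condition when $n$ is odd), and you propose to produce it ``via Theorem~\ref{thm:symmetric-cohomology} or Clozel's existence conjecture.'' Neither works. Theorem~\ref{thm:symmetric-cohomology} only constructs cohomological cusp forms that are symmetric-power transfers from $\mathrm{GL}_2$; there is no reason the successor weight $\mu'$ arising from the combinatorics of $\widehat{w}$ and the interlacing $\lambda$ should be of the very special shape $(k-2)\rho_{n-1}+s$ required there. And Clozel's statement, as the paper itself notes, is a conjecture, so invoking it would make the theorem conditional. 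A second, related issue: you assert that $\mu'$ ``remains regular, so that the hypotheses of Theorem~\ref{thm:mahnkopf-1} are still satisfied,'' and call this a straightforward manipulation. In fact regularity of $\mu'$ is not automatic from regularity of $\mu$; it depends on the particular $\lambda$ chosen, and part of Mahnkopf's work in \cite[\S5.5]{mahnkopf2} is precisely to thread the needle so that the descent can continue (including the existence of the required cohomological $\pi'$ on $G_{n-2}$, which he arranges by an independent construction rather than by appeal to symmetric powers). So your ``three combinatorial points'' are not all combinatorial: the second one hides real content that your proposal does not supply.
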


Note that Theorem~\ref{thm:mahnkopf-2}, since it uses  
Theorem~\ref{thm:mahnkopf-1}, also depends on 
Assumption~\ref{ass:mahnkopf}.

\section{A conjecture on twisted $L$-functions}

The periods $c^+$ and $c^-$ which appear in Deligne's conjecture are 
motivically defined. (See Deligne \cite[(1.7.2)]{deligne}.) On the other 
hand, the periods which appear in the work of Harder, and 
also Mahnkopf, have an entirely different origin, namely, 
they come by a comparison of rational structures on cuspidal cohomology 
on the one hand and a Whittaker model for the representation, on the 
other. See Harder \cite[p. 81]{harder} and Mahnkopf \cite[\S 
3.4]{mahnkopf2}. It is not at the moment clear how one might explicitly 
compare these different periods attached to the same object. (See also 
Remark (2) in Harder's paper \cite[p. 85]{harder}.)

However, one might ask if these different periods behave in the same 
manner under twisting. Here is a simple example to illustrate this. 
Let $\chi$ be an even Dirichlet character. Let $m$ be an even 
positive integer. Such an $m$ is critical for $L(s,\chi)$. It is well 
known \cite[Corollary VII.2.10]{neukirch} that 
$$
L_f(m,\chi) \sim_{{\mathbb Q}(\chi)} (2\pi i)^m \mathfrak{g}(\chi).
$$ 
By $\sim_{{\mathbb Q}(\chi)}$ we mean up to an element of the 
(rationality) field ${\mathbb 
Q}(\chi)$ generated by the values of $\chi$. 
Now let $\eta$ be possibly another even Dirichlet character. 
Applying the result to the character $\chi\eta$, and using 
\cite[Lemma 8]{shimura2}, we get
$$
L_f(m,\chi\eta)/L_f(m,\chi) \sim_{{\mathbb Q}(\chi){\mathbb Q}(\eta)} 
\mathfrak{g}(\eta).
$$
Observe that the {\it period}, namely the $(2\pi i)^m$, does not show up, 
and we have the relation that the special value of the twisted 
$L$-function and the original $L$-function differ, up to rational 
quantities, by the Gauss sum of the twisting character. 

Another example along these lines which follows from Shimura \cite{shimura3} 
is the following. Let $\varphi \in S_k(N,\omega)$ and let $\eta$ be an 
even Dirichlet character. For any integer $m$, with $1 \leq m \leq k-1$, 
we have 
$$ 
L_f(m,\varphi,\eta) \sim_{{\mathbb Q}(\varphi){\mathbb Q}(\eta)} 
\mathfrak{g}(\eta) 
L_f(m,\varphi).
$$ 
The point being that, in the above relation, the periods  
$c^{\pm}(\varphi)$ do not show up, and so the definition of these periods 
is immaterial. (One can rewrite this relation entirely in terms of periods 
of the associated motives and it takes the form 
$ c^{\pm}(M(\varphi) 
\otimes M(\eta)) \sim \mathfrak{g}(\eta)c^{\pm}(M(\varphi)), 
$ 
the notation being obvious.) 

Even if one cannot prove a precise theorem on 
special values of $L$-functions in terms of these--motivically or 
otherwise defined--periods, one can still hope to prove such period 
relations. Sometimes such period relations are sufficient for 
applications; see for instance Murty--Ramakrishnan \cite{kumar-dinakar} 
where such a period relation is used to prove Tate's conjecture in a 
certain case.

With this motivation, we formulate the following conjecture on the 
behavior of the special values of symmetric power $L$-functions under 
twisting by Dirichlet characters.

\begin{con}
\label{con:twisted}
Let $\varphi \in S_k(N,\omega)$ be a primitive form. Let $\eta$ be a 
primitive Dirichlet character. 
\begin{enumerate}
\item Suppose $\eta$ is even, i.e., $\eta(-1)=1$. Then the critical set 
for $L_f(s, {\rm Sym}^n\varphi, \eta)$ is the same as the critical set 
for $L_f(s, {\rm Sym}^n\varphi),$ and if $m$ is critical, then 
$$
L_f(m, {\rm Sym}^n\varphi, \eta) \sim 
\mathfrak{g}(\eta)^{\lceil (n+1)/2 \rceil}
L_f(m, {\rm Sym}^n\varphi),
$$ 
unless $n$ is even and $m$ is odd (to the left of center of symmetry), 
in which case we have
$$
L_f(m, {\rm Sym}^n\varphi, \eta) \sim \mathfrak{g}(\eta)^{n/2}
L_f(m, {\rm Sym}^n\varphi).
$$ 
\smallskip

\item Suppose $\eta$ is odd, i.e., $\eta(-1)=-1,$ and n is even. 
Then, if $m$ is critical  
for $L_f(s, {\rm Sym}^n\varphi, \eta),$ then 
either $m+1$ or $m-1$ is critical for $L_f(s, {\rm Sym}^n\varphi)$. 
For such an $m$ to the right of the center of symmetry we have 
$$
L_f(m, {\rm Sym}^n\varphi, \eta) \sim 
((2\pi i)^{\mp}\mathfrak{g}(\eta))^{n/2+1}
L_f(m \pm 1, {\rm Sym}^n\varphi),
$$ 
and if $m$ is to the left of the center of symmetry, we have
$$
L_f(m, {\rm Sym}^n\varphi, \eta) \sim 
((2\pi i)^{\mp}\mathfrak{g}(\eta))^{n/2}
L_f(m \pm 1, {\rm Sym}^n\varphi).
$$ 
\smallskip

\item Suppose $\eta$ is odd, i.e., $\eta(-1)=-1$, and n is odd. 
Then the critical set 
for $L_f(s, {\rm Sym}^n\varphi, \eta)$ is the same as the critical set 
for $L_f(s, {\rm Sym}^n\varphi)$. Let $k \geq 3$. If $m$ is 
critical for $L_f(s, {\rm Sym}^n\varphi, \eta),$ then either $m+1$ or 
$m-1$ is critical for $L_f(s, {\rm Sym}^n\varphi),$ and for such an $m$
$$
L_f(m, {\rm Sym}^n\varphi, \eta) \sim 
((2\pi i)^{\mp}\mathfrak{g}(\eta))^{(n+1)/2}
L_f(m \pm 1, {\rm Sym}^n\varphi).
$$ 
\end{enumerate}
In all the three cases $\sim$ means up to an element of 
${\mathbb Q}(\varphi){\mathbb Q}(\eta)$.
\end{con}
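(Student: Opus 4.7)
My plan is to derive Conjecture~\ref{con:twisted} from Mahnkopf's Theorem~\ref{thm:mahnkopf-2} applied to the symmetric power lift. Concretely, by Theorem~\ref{thm:symmetric-cohomology} one can twist ${\rm Sym}^n(\pi(\varphi))$ by an id\`ele class character and a power of $|\cdot|$ so that the result $\Pi$ lies in ${\rm Coh}(G_{n+1}, \mu^{\vee})$ for an explicit regular $\mu \in X_0^+(T_{n+1})$. Under the translation $L(s, \Pi) = L(s + n(k-1)/2, {\rm Sym}^n \varphi)$, each critical integer $m$ from Lemmas~\ref{lem:critical-sym-2r+1}--\ref{lem:critical-sym-2r} corresponds to a critical character $\chi \in {\rm Crit}(\Pi)^{\leq}$ with archimedean part $\chi_\infty = {\rm sgn}^{\epsilon} |\cdot|^{l}$, and $L_f(m, {\rm Sym}^n \varphi)$ and $L_f(m, {\rm Sym}^n \varphi, \eta)$ become, up to controlled constants, the values $L(\Pi \otimes \chi, 0)$ and $L(\Pi \otimes \chi \eta, 0)$.

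For part (1) (the character $\eta$ even), I would apply Theorem~\ref{thm:mahnkopf-2} twice with the \emph{same} $\chi$ and auxiliary characters $1$ and $\eta$. Since $(\chi\eta)_\infty$ has the same sign as $\chi_\infty$, the two applications share the same $\epsilon(\chi_\infty)$, and in the ratio the periods $P_\mu^l$, $\Omega_{\epsilon}(\Pi)$, and the factor $\mathfrak{g}(\chi)^{[(n+1)/2]}$ all cancel, leaving
$$
L(\Pi \otimes \chi \eta, 0) \ \sim_{{\mathbb Q}(\Pi){\mathbb Q}(\eta)} \ \mathfrak{G}(\eta)^{-1} \, L(\Pi \otimes \chi, 0).
$$
Directly analyzing Mahnkopf's local Rankin--Selberg integrals at the primes ramifying $\eta$, specialized to the representation $\Pi_v$, should identify $\mathfrak{G}(\eta)^{-1}$ with $\mathfrak{g}(\eta)^{\lceil (n+1)/2 \rceil}$ modulo ${\mathbb Q}(\varphi){\mathbb Q}(\eta)$; the exceptional drop to $\mathfrak{g}(\eta)^{n/2}$ when $n$ is even and $m$ is odd (to the left of the center) corresponds to a specific parity of $l$ at which one local Gauss sum contribution degenerates.

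For parts (2) and (3) ($\eta$ odd), the critical set shifts by $\pm 1$ when twisting by $\eta$, so a direct ratio at a common $m$ is not available. I would instead apply Theorem~\ref{thm:mahnkopf-2} to two characters $\chi_1$ at $m$ (for the twisted $L$) and $\chi_2$ at $m \pm 1$ (for the untwisted $L$), with the sign of the shift chosen to make the archimedean signs match (forced by the condition $\chi \in {\rm Crit}(\Pi)^{\leq}$, i.e., by the position of $m$ relative to the center of symmetry). Now $\chi_1$ and $\chi_2$ have archimedean exponents $l$ and $l \pm 1$, so the periods $P_\mu^{l_1}$ and $P_\mu^{l_2}$ no longer cancel; from the explicit form of $P_\mu^{l}$ in \cite{mahnkopf2}, the ratio $P_\mu^{l_1}/P_\mu^{l_2}$ is a power of $2\pi i$ up to ${\mathbb Q}(\Pi)$-rational factors, and contributes precisely the $(2\pi i)^{\mp}$ predicted in the conjecture. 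Combined with the Gauss sum analysis of part (1), this yields the factors $((2\pi i)^{\mp} \mathfrak{g}(\eta))^{(n+1)/2}$ and $((2\pi i)^{\mp} \mathfrak{g}(\eta))^{n/2}$; the hypothesis $k \geq 3$ in part (3) ensures via Remark~\ref{rem:easy} that both $m$ and $m \pm 1$ remain critical.

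The principal obstacle is Mahnkopf's Assumption~\ref{ass:mahnkopf}: the nonvanishing of $P_\mu(m)$ must hold at every level of the inductive descent from $G_{n+1}$ down to $G_1$ or $G_2$, for every intermediate cohomological representation, and at every critical $m$. Secondary obstacles --- each tractable in principle but requiring careful local computation --- are the identification of $\mathfrak{G}(\eta)$ as a clean power of $\mathfrak{g}(\eta)$ with the predicted exponents, the isolation of the exceptional sub-case of part (1), and the identification of $P_\mu^l/P_\mu^{l \pm 1}$ with $(2 \pi i)^{\mp}$ up to rational factors. Of these, the Gauss sum bookkeeping for the exceptional sub-case in part (1) strikes me as the subtlest, and is the place most likely to require a genuinely new input rather than a formal manipulation of Mahnkopf's formulas.
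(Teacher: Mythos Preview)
The statement you are attempting to prove is labeled a \emph{Conjecture} in the paper, and the paper does not offer a proof. What follows the statement is a paragraph of heuristics: the cases $n=1,2,3$ are already theorems of Shimura, Sturm, and Garrett--Harris; for $n\ge 4$ with $\varphi$ dihedral the conjecture is verified by applying the $n=1,2$ theorems to each summand of the isobaric decomposition in Lemma~\ref{lem:isobaric-dihedral} (and the exponent $\lceil (n+1)/2\rceil$ is explained as the number of isobaric summands); and the authors note that the full statement should be a consequence of the (still conjectural) period relations of Blasius and Panchiskin. The route via Mahnkopf that you propose is in fact mentioned in the paper, but only as a possible approach to a \emph{weaker} version of the conjecture, conditional on cuspidality of the symmetric power lift.

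So your proposal is not so much a wrong proof as a program that the authors themselves flag as incomplete. The principal gap you already identify --- Assumption~\ref{ass:mahnkopf} --- is a genuine open problem, so nothing you build on Theorem~\ref{thm:mahnkopf-2} will be unconditional. Beyond that, two of your ``secondary obstacles'' are more serious than you suggest. First, the $\eta$ in Theorem~\ref{thm:mahnkopf-2} is described as ``a certain auxiliary character,'' i.e.\ it is part of Mahnkopf's construction rather than a free parameter; you cannot simply set it equal to $1$ and then to your Dirichlet twist without going back into Mahnkopf's proof to justify this. Second, the Gauss-sum exponent visible in Theorem~\ref{thm:mahnkopf-2} for $\Pi$ on $G_{n+1}$ is $[(n+1)/2]$, which for $n$ even equals $n/2$ --- matching the \emph{exceptional} sub-case of part~(1), not the generic exponent $\lceil (n+1)/2\rceil = n/2+1$. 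The missing factor of $\mathfrak{g}(\eta)$ would have to come out of the unspecified $\mathfrak{G}(\eta)$, and until that is computed explicitly you have no mechanism that distinguishes the generic and exceptional sub-cases. In short, your plan is in the spirit of the paper's closing remark, but it does not close the gaps that keep the statement a conjecture.
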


Now we elaborate on the heuristics on which we formulated the above 
conjecture. For $n=1$ and $n=2$ this is contained in 
the theorems of Shimura \cite{shimura2} \cite{shimura3} and 
Sturm \cite{sturm1} \cite{sturm2} respectively. 
For $n=3$, using results on triple product $L$-functions 
for which Blasius \cite{blasius1} is a convenient reference and 
using Garrett--Harris \cite[\S 6]{garrett-harris}, one can verify 
that the above conjecture is true. Further, for $n \geq 4$ and if 
$\varphi$ is 
dihedral, i.e., $\pi(\varphi) = {\rm AI}_{K/{\mathbb Q}}(\chi)$, then the 
conjecture follows by applying the known cases of $n=1,2$ to each 
summand 
in the isobaric decomposition in Lemma~\ref{lem:isobaric-dihedral}.  
Observe that the exponent $\lceil (n+1)/2 \rceil$ appearing in the 
conjecture is the number of summands in the isobaric decomposition.

Although we have not checked this, our conjecture
should follow from the more general 
conjectures of Blasius \cite[Conjecture L.9.8]{blasius2} and 
Panchiskin \cite[Conjecture 2.3]{panchiskin} on the behavior of periods 
of motives twisted by Artin motives.

It appears that the authors may be able to prove a weaker version 
of Conjecture~\ref{con:twisted}, and
so really prove a relation amongst appropriate periods, using Theorem~\ref{thm:mahnkopf-2}
of Mahnkopf; at least in the case when ${\rm Sym}^n(\pi(\varphi))$ is known to exist as a
cuspidal automorphic representation.

\bigskip

\noindent
A.~Raghuram, Department of Mathematics,
Oklahoma State University, 
401 Mathematical Sciences, 
Stillwater, OK 74078, USA. 
E-mail : {\tt araghur@math.okstate.edu}

\smallskip

\noindent
Freydoon~Shahidi, Department of Mathematics,
Purdue University, 150 N. University Street,
West Lafayette, IN 47907, USA.
E-mail : {\tt shahidi@math.purdue.edu}

\end{document}